\documentclass[a4paper,final,onefignum,onetabnum]{siamart190516} 
\usepackage[utf8]{inputenc}
\usepackage[active]{srcltx}
\usepackage{amsfonts,amsmath,amssymb,bm,graphicx}
\usepackage{xcolor}
\usepackage[utf8]{inputenc}
\usepackage{multirow}
\usepackage{array}
\usepackage{enumitem}
\usepackage{type1cm}
\usepackage{bbm}
\usepackage{verbatim}
\usepackage{hyperref}

\newcommand{\RR}{\mathbb{R}}
\newcommand{\ZZ}{\mathbb{Z}}
\newcommand{\NN}{\mathbb{N}}
\newcommand{\TT}{\mathbb{T}}

%


\title{Adaptive solution of initial value problems by a dynamical Galerkin scheme\footnotemark[1]\ \footnotemark[2]}

\author{R.M. Pereira \footnotemark[3] \and  N. Nguyen van yen \footnotemark[4]\ \footnotemark[5] \and K. Schneider \footnotemark[6]\ \and M. Farge \footnotemark[4]} 

\begin{document}
\maketitle

\renewcommand{\thefootnote}{\fnsymbol{footnote}}
\footnotetext[1]{
The authors would like to thank Greg Hammett for a discussion  
which strongly motivated this work.
The French Federation for Fusion Studies and the PEPS program of CNRS-INSMI are acknowledged for financial support.
}
\footnotetext[2]{
This work, 
supported by the European Communities under the contract of Association between EURATOM, 
CEA and the French Research Federation for Fusion Studies, 
was carried out within the framework of the European Fusion Development Agreement. 
The views and opinions expressed herein do not necessarily reflect those of the European Commission.
}
\footnotetext[3]{Instituto de F\'isica, Universidade Federal Fluminense, Niterói, Brazil}
%
\footnotetext[4]{LMD--CNRS, \'Ecole Normale Sup\'erieure-PSL, Paris, France}
\footnotetext[5]{FB Mathematik und Informatik, Freie Universit\"at Berlin, Berlin, Germany. NVY thanks the Humboldt foundation for post-doctoral support.}
\footnotetext[6]{Institut de Math\'ematiques de Marseille (I2M), CNRS, Aix-Marseille Universit\'e, Marseille, France}

\begin{abstract}
We study dynamical Galerkin schemes for evolutionary partial differential equations (PDEs), where the projection operator changes over time. 
When selecting a subset of basis functions, the projection operator is non-differentiable in time and an integral formulation has to be used. 
We analyze the projected equations with respect to existence and uniqueness of the solution and prove that non-smooth projection operators introduce dissipation, a result which is crucial for adaptive discretizations of PDEs, e.g., adaptive wavelet methods.
For the Burgers equation we illustrate numerically that thresholding the wavelet coefficients, and thus changing the projection space, will indeed introduce dissipation of energy.
We discuss consequences for the so-called `pseudo-adaptive' simulations, where time evolution 
and dealiasing are done in Fourier space, whilst thresholding is carried out in wavelet space.
Numerical examples are given for the inviscid Burgers equation in 1D and the incompressible Euler equations in 2D and 3D. 
\end{abstract}


\begin{keywords} wavelets, adaptivity, Galerkin method, dissipation \end{keywords}

\begin{AMS} 65N30; 65N50; 65T60; 65M60 \end{AMS}

\pagestyle{myheadings}
\thispagestyle{plain}
\markboth{R.M. Pereira, N. Nguyen van yen, K. Schneider and M. Farge}{Adaptive solution of initial value problems by a dynamical Galerkin scheme}

\section{Introduction}
Motivated by high accuracy at reduced computational cost with respect to uniform grid methods,
numerous adaptive discretization schemes of evolutionary partial differential equations (PDEs) have been developed since decades, see, e.g., \cite{Brandt77}.
Real world problems, for instance, fluid and plasma turbulence, or reactive flows, typically involve a multitude of active spatial and temporal scales and adaptivity allows to concentrate the computational effort at locations and time instants where it is necessary to ensure a given numerical accuracy, while elsewhere efforts may be significantly reduced. 
Among adaptive approaches, multiresolution and wavelet methods offer an attractive possibility to introduce locally refined grids, which dynamically track the evolution of the solution in space and scale. Automatic error control of the adaptive discretization, with respect to a uniform grid solution, is hereby an advantageous feature \cite{Cohen00}. For a review of adaptive multiresolution methods in the context of computational fluid dynamics (CFD) we refer to \cite{ScVa10}.

In many applications, in particular in CFD, Galerkin truncated discretizations of the underlying PDEs which use a finite number of modes are the methods of choice.
Spectral methods \cite{CQHZ88} are a prominent example and Fourier-Galerkin schemes are widely used for direct numerical simulation of turbulence \cite{IGKa2009} due to their high accuracy. For efficiency reasons the convolution product in spectral space, due to the nonlinear quadratic term and typically encountered in hydrodynamic equations, is evaluated in physical space and aliasing errors are completely removed.
This implementation, called pseudo-spectral formulation with full dealiasing using the $2/3$ rule, is equivalent to a Fourier-Galerkin scheme up to round-off errors \cite{CQHZ88}. Thus the discretization conserves the $L^2$-norm of the solution.
A classical test to check the stability of pseudo-spectral codes for viscous Burgers or Navier-Stokes equations is to perform simulations with vanishing viscosity. This allows to verify if the $L^2$ norm of the solution, i.e., typically energy, is conserved and for sufficiently small time steps the truncated Galerkin schemes are stable.
However, the solution of the Galerkin truncated inviscid equations, e.g., inviscid Burgers or incompressible Euler, shows artefacts in the form of oscillations and the computed solution is not physical.
Already T.D. Lee \cite{Lee1952} predicted energy equipartition between all Fourier coefficients in spectral approximations for 3D incompressible Euler, called thermalization, by applying Liouville's theorem from statistical mechanics.

The effect of truncating Fourier-Galerkin schemes has been studied in \cite{RFNM11,MFNBR2020} for the 1D Burgers and 2D incompressible Euler equations. The observed short-wavelength oscillations were named `tygers' and were interpreted as first manifestations of thermalization \cite{Lee1952}. The proposed cause was the resonant interaction between fluid particle motion and truncation waves.

Motivated by this work, detailed numerical analysis of Fourier-Galerkin methods for nonlinear evolutionary PDEs, in particular for inviscid Burger and incompressible Euler, was then performed in \cite{BaTa13}.
The authors showed spectral convergence for smooth solutions of the inviscid Burgers equation and the  incompressible Euler equations. 
However, when the solution lacks sufficient smoothness, then both the spectral and the $2/3$ pseudo-spectral Fourier methods exhibit nonlinear instabilities which generate spurious oscillations.
In particular it was shown that after the shock formation in the inviscid Burgers equation, the total variation of bounded (pseudo-) spectral Fourier solutions must increase with the number of increasing modes. 
The $L^2$-energy conservation of the spectral solution is reflected through spurious oscillations, which is in contrast with energy dissipating Onsager solutions.
A complete explanation of these nonlinear instabilities was thus given and `tygers' \cite{RFNM11} were demystified.

To remove these non-physical oscillations in Galerkin truncated approximations different numerical regularization techniques have been proposed, commonly used in numerical methods for solving hyperbolic conservation laws.
If the solution is not unique the `regularized' numerical scheme selects one weak solution, which should correspond to the physically relevant one, e.g., the entropy solution of the inviscid Burgers equation, which can be computed exactly using the Legendre transform \cite{Vergassola1994}.
These approaches include upwind techniques \cite{Osher1982}, total variation diminishing schemes \cite{Harten1983}, shock limiters \cite{Sweby1984}, spectral vanishing viscosity \cite{Tadmor1989, Gottlieb2001}, classical viscosity and hyperviscosity \cite{BLSB1981} and also inviscid regularization schemes \cite{BLTi2008, KhTi2008}.


In the context of adaptive wavelet schemes, numerical experiments with the 1D inviscid Burgers equation showed that wavelet filtering of the Fourier-Galerkin truncated solution in each time step, which corresponds to denoising and is removing the oscillations, yields the solution to the viscous Burgers equation \cite{Nguyenvanyen2008}.
%
%
For the 2D  incompressible  Euler  equations \cite{Nguyenvanyen2009} different wavelet techniques for regularizing truncated Fourier-Galerkin solutions were studied using either real-valued or complex-valued wavelets and the results were compared with viscous and hyperviscous regularization methods.
The results show that nonlinear wavelet filtering with complex-valued wavelets preserves the flow dynamics and suggest $L^2$ convergence to the reference solution. The wavelet representation offers at the same time a non negligible compression rate of about $3$ for fully developed 2D turbulence.

Simulations of the 3D wavelet-filtered Navier-Stokes equations \cite{OYSFK11} showed that statistical predictability of isotropic turbulence can be preserved with a reduced number of degrees of freedom. 
This approach, called Coherent Vorticity Simulation (CVS) \cite{FSK1999} is a multiscale method to compute incompressible turbulent flows based on the wavelet filtered vorticity field.
The coherent vorticity, corresponding to the few coefficients whose modulus is larger than a threshold, represents the organized and energetic flow part, while the remaining incoherent vorticity is noise like.
Applying wavelet-based denoising, i.e., CVS filtering, to the 3D Galerkin truncated incompressible Euler equations confirmed that this adaptive regularization models turbulent dissipation and thus allows to compute turbulent flows with intermittent nonlinear dynamics and a $k^{-5/3}$ Kolmogorov energy spectrum \cite{Farge2017}.
A significant compression rate of the wavelet coefficients of vorticity is likewise observed which reduces the number of active degrees of freedom to only about 3.5\% of the total number of coefficients for the studied turbulent flows, computed at Taylor microscale based Reynolds number of $200$.

Filtering the wavelet representation of the Galerkin truncated inviscid Burgers and 2D incompressible Euler equations in \cite{PNFS13}, by retaining only the significant coefficients, showed that the spurious oscillations due to resonance can be filtered out, and dissipation can thus be introduced by the adaptive representation.

The aim of the current work is to provide a rigorous mathematical framework to analyze and to understand the properties of adaptive discretizations of evolutionary PDEs based on dynamical Galerkin schemes. To this end we analyze these adaptive Galerkin discretizations.
Galerkin schemes by itself are particularly appealing due to their optimality properties, conservation of energy and the ease of numerical analysis using Hilbert space techniques. Introducing space adaptivity, e.g., by wavelet filtering in each time step, implies that the projection operator changes over time as only a subset of basis functions is used. 
Hence, the projection operator is non-differentiable in time and we propose the use of an integral formulation. 
The projected equations are then analyzed with respect to existence and uniqueness of the solution.
It is proven that non-smooth projection operators introduce dissipation, a result which is crucial for adaptive discretizations of nonlinear PDEs.
Existence and uniqueness of the solution of the projected equations is likewise shown.
Tools from countable systems of ordinary differential equations and functional analysis in Banach spaces are used. For related background we refer the reader to text books \cite{Deim77, Schwabik1992} and \cite{Fili2013}.

The remainder of the article is organized as follows.
Dynamical Galerkin schemes are defined in section~\ref{sec:dyngal} and the existence and uniqueness of the projected equations is analyzed giving an explanation of the introduced energy dissipation.
Space and time discretization of the Burgers and incompressible Euler equations is described in section~\ref{sec:discret}.
Numerical examples are presented in section~\ref{sec:numex} to illustrate the dissipation mechanism.
Section~\ref{sec:appl} shows applications of the CVS filtering to the inviscid Burgers equation in 1D and the 2D and 3D incompressible Euler equations.
Some conclusions are drawn in section~\ref{sec:concl}.


\section{Dynamical Galerkin schemes}
\label{sec:dyngal}

  \subsection{Motivation}

Evolutionary PDEs can be discretized with a Galerkin method in space,
by projecting the equation onto a sequence of finite dimensional linear spaces, which approximate the solution in space
when the discretization parameter, $h$, goes to zero.
Using truncation to a finite number of modes, the infinite dimensional countable system of ordinary differential equations in time can be reduced.
An important restriction of such methods is that the projection space typically does not evolve in time and the number of modes is fixed. Here, we propose a formulation of adaptive Galerkin discretizations where the projection operator and the number of modes can change over time and we show that under suitable conditions adaptation can introduce dissipation.

  \subsection{Formal definition}

Let $H$ be a Banach space, and consider the evolution equation
\begin{equation}
\label{eq:burgers_abstract}
u' = f(u) 
\end{equation}
where $u'$ denotes the weak time derivative of $u$
and $f$ is defined and continuous from some sub-Banach space $D(f) \subset H$ into $H$. 
Equation (\ref{eq:burgers_abstract}) is completed by a suitable initial condition $u(0)= u(t=0)$.
To be more specific, we shall focus below on the case of the one-dimensional Burgers equation on the torus $\RR/\ZZ$:
\begin{equation}
\label{eq:burgers}
\partial_t u + u \partial_x u = \nu \partial_{xx} u
\end{equation}
which corresponds to (\ref{eq:burgers_abstract}) with
\begin{equation}
\label{eq:burgers_case}
f(u) = \nu \partial_{xx} u - u \partial_x u
\end{equation}
and $u = u(x,t)$.

The classical Galerkin discretization of (\ref{eq:burgers_abstract}) is defined as follows: 
for $h>0$, let $H_h$ be a fixed finite dimensional subspace of $D(f)$,
such that:
$$
\overline{\bigcup_{h>0} H_h} = H
$$
where the adherence is taken in $H$, 
and let $P_h$ be the orthogonal projector on $H_h$.
Find $u_h : [0,T] \in H_h$ such that:
\begin{equation}
\label{eq:burgers_galerkin}
u_h' = P_h f(u_h) = P_h(\nu \partial_{xx} u_h - u_h \partial_x u_h)
\end{equation}

Now for $t\in [0,T]$, assume that $P_h(t)$ is an orthogonal projector on some 
finite dimensional subspace $H_h(t)$ of $H$.
The dimension of $H_h(t)$ is allowed to change in time,
but we assume that $H_h(t)$ remains within a fixed finite dimensional subspace $H_h^0$.
$P_h$ therefore takes its values in the set of orthogonal projectors $H_h^0 \to H_h^0$,
which we denote by $\Pi_h^0$, with its natural smooth manifold structure as a closed subset of all linear mappings $H_h^0 \to H_h^0$.
We want to find $u_h : [0,T] \in H_h(t)$ which is an approximation of $u$.

Let us first assume that $P_h$ is a smooth function of time.
As in the case where $P_h$ is time independent, we apply $P_h(t)$ to the differential equation to get:
\begin{equation}
\label{eq:burgers_dynamic_galerkin_1}
P_h(t) u_h'(t) = P_h(t) f(u_h(t))
\end{equation}
but now, since $P_h$ does not commute with the time-derivative,
this equation is not sufficient to determine $u_h'(t)$ entirely.
We need another equation to fix the component of $u_h'(t)$ which is in the orthogonal of $H_h(t)$, i.e., in $H^\perp_h(t)$.

To derive this equation, we start from the condition that $u_h(t) \in H_h(t)$ for every $t$,
which is equivalent to
\begin{equation}
P_h(t) u_h(t) = u_h(t).
\end{equation}
Differentiating in time this identity leads to:
\begin{equation}
P_h(t) u_h'(t) + P_h'(t) u_h = u_h'(t)
\end{equation}
or equivalently
\begin{equation}
\label{eq:burgers_dynamic_galerkin_2}
\left(1-P_h(t) \right) u_h'(t) = P_h'(t) u_h (t)
\end{equation}
which is exactly the equation we were looking for.
By adding (\ref{eq:burgers_dynamic_galerkin_1}) and (\ref{eq:burgers_dynamic_galerkin_2}) together,
we obtain the definition of the dynamical Galerkin scheme:
\begin{equation}
\label{eq:burgers_dynamic_galerkin}
u_h'(t) = P_h(t) f \left(u_h(t)\right) + P_h'(t) u_h(t)
\end{equation}

By comparing this differential equation with (\ref{eq:burgers_galerkin}), 
we observe the appearance of a new term proportional to the time-derivative of $P_h$.
This is the essential ingredient which characterizes the dynamical Galerkin scheme.
We now show the following 

\medskip

\begin{lemma}
\label{thm:smooth_projector}
Any solution of (\ref{eq:burgers_dynamic_galerkin}) such that $u_h(0) \in H_h(0)$ also satisfies $u_h(t) \in H_h(t)$ for all $t$,
and moreover
\begin{equation}
\label{eq:energy_equation}
\frac{1}{2} \frac{\mathrm{d}}{\mathrm{d}t} \Vert u_h(t) \Vert^2 = (u_h(t), f(u_h(t)))
\end{equation}
\end{lemma}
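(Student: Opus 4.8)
The plan is to treat the two assertions separately, both hinging on a single algebraic identity obtained by differentiating the idempotency relation $P_h(t)^2 = P_h(t)$, namely $P_h' P_h + P_h P_h' = P_h'$, which rearranges to $P_h P_h' = P_h'(1 - P_h)$. Throughout I would also use the self-adjointness of $P_h$, which holds precisely because it is an \emph{orthogonal} projector.

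For the invariance claim, I would introduce the defect $w(t) := (1 - P_h(t))\, u_h(t)$, which measures how far $u_h(t)$ lies outside $H_h(t)$, so that $u_h(t) \in H_h(t)$ is equivalent to $w(t) = 0$. Differentiating and substituting the scheme (\ref{eq:burgers_dynamic_galerkin}) for $u_h'$, the contribution $(1-P_h)P_h f(u_h)$ drops out since $(1-P_h)P_h = P_h - P_h^2 = 0$, and the surviving terms $-P_h' u_h + (1-P_h)P_h' u_h = -P_h P_h' u_h$ collapse, via the identity above, to the linear equation $w'(t) = -P_h'(t)\, w(t)$. Because everything takes place in the fixed finite-dimensional space $H_h^0$ with $t \mapsto P_h'(t)$ continuous, this is a linear ODE with continuous coefficients; since $w(0) = 0$ by hypothesis, uniqueness of solutions (or a one-line Gr\"onwall estimate on $\Vert w \Vert^2$) forces $w \equiv 0$, hence $u_h(t) \in H_h(t)$ for all $t$.

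For the energy identity I would compute $\tfrac12 \tfrac{\mathrm d}{\mathrm dt}\Vert u_h \Vert^2 = (u_h, u_h')$ and insert the scheme, yielding two contributions. In the first, $(u_h, P_h f(u_h))$, self-adjointness of $P_h$ together with the now-established relation $P_h u_h = u_h$ lets me move $P_h$ onto its first argument and obtain exactly $(u_h, f(u_h))$. The second contribution $(u_h, P_h' u_h)$ must vanish: from $P_h P_h' u_h = P_h'(1-P_h)u_h = 0$ (again using $P_h u_h = u_h$) the vector $P_h' u_h$ lies in $H_h^\perp$, whereas $u_h \in H_h$, so the inner product is zero by orthogonality. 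Adding the two pieces gives (\ref{eq:energy_equation}).

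The only genuinely delicate point is the reduction to $w' = -P_h' w$, where one must carefully combine $P_h^2 = P_h$ with its differentiated form; once that identity is secured, both the invariance and the cancellation of the spurious term $(u_h, P_h' u_h)$ follow mechanically. I would emphasize that the self-adjointness of the orthogonal projector is essential in the energy computation—this is exactly the step that would fail for a general oblique projector, and it is what guarantees that a \emph{smooth} change of projection space neither adds nor removes energy beyond the intrinsic $(u_h, f(u_h))$ contribution.
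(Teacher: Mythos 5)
Your proof is correct and follows essentially the same route as the paper: differentiate the idempotency relation $P_h^2=P_h$, show the defect $(1-P_h(t))u_h(t)$ remains zero, and kill the term $(u_h,P_h'u_h)$ via the identity $P_hP_h'P_h=0$ (equivalently, your observation that $P_h'u_h\in H_h^\perp$ once $P_hu_h=u_h$). If anything, your treatment of the first part is slightly more careful than the paper's: the paper asserts $\frac{\mathrm{d}}{\mathrm{d}t}\bigl((1-P_h)u_h\bigr)=0$ outright, whereas this derivative actually equals $-P_h'(1-P_h)u_h$, so the conclusion genuinely requires the linear-ODE uniqueness (or Gr\"onwall) step that you make explicit.
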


\begin{proof}

By differentiating $P_h(t)^2 = P_h(t)$ and $P_h(t)^3 = P_h(t)$ respectively,
we obtain the identities $$P_h(t) P_h(t)' + P_h(t)' P_h(t) = P_h(t)' \quad \mathrm{and} \quad P_h(t) P_h(t)' P_h(t) = 0,$$
which imply that
\begin{equation}
\frac{\mathrm{d}}{\mathrm{d}t}((1-P_h(t))u_h(t)) = 0
\end{equation}
and the first part follows.
To prove the second part, take the inner product of the equation with $u_h$:
\begin{equation}
\frac{1}{2} \frac{\mathrm{d}}{\mathrm{d}t} \Vert u_h(t) \Vert^2 = (u_h(t), f(u_h(t))) + (u_h(t), P_h'(t) u_h(t)),
\label{eq:energy_eq2}
\end{equation}
where the last term can be rewritten 
\begin{equation}
(P_h(t) u_h(t), P_h'(t) P_h(t) u_h(t)) = (u_h(t), P_h(t) P_h'(t) P_h(t) u_h(t)) = 0 \; ,
\nonumber
\end{equation}
which proves (\ref{eq:energy_equation}).
\end{proof}

\medskip

The above computations are valid when $P_h$ is differentiable,
which is a severe restriction and forbids us in particular to switch on and off dynamically some functions in the basis of integration,
which is the goal that we had set ourselves in the beginning.
To pursue we therefore need to extend the definition of the scheme to non-differentiable $P_h$.
For this we consider the integral formulation of (\ref{eq:burgers_dynamic_galerkin}), namely
\begin{equation}
\label{eq:burgers_dynamic_galerkin_integral1}
u_h(t) = u_h(0) + \int_0^t P_h(\tau) f(u_h(\tau)) \mathrm{d}\tau +  \int_0^t P_h'(\tau) u_h(\tau)\mathrm{d}\tau.
\end{equation}
This equation can be rewritten using a Stieltjes integral with respect to $P_h$:
\begin{equation}
\label{eq:burgers_dynamic_galerkin_integral}
u_h(t) = u_h(0) + \int_0^t P_h(\tau) f(u_h(\tau)) \mathrm{d}\tau +  \int_0^t \mathrm{d} P_h(\tau) u_h(\tau)
\end{equation}
which we call the integral formulation of the dynamical Galerkin scheme.

This equation makes sense as soon as $P_h$ has bounded variation (BV),
which gives it a much wider range of applicability than (\ref{eq:burgers_dynamic_galerkin}),
allowing in particular discontinuities in $P_h$.
To solve such an equation we need to resort to the theory of generalized ordinary differential equations,
which we now recall.

  \subsection{Existence and uniqueness of a solution to the projected equations}

The rigorous setting for integral equations such as (\ref{eq:burgers_dynamic_galerkin_integral}) involving Stieltjes integrals
is explained in detail in the book \cite{Schwabik1992}.
An alternative introduction can be found in \cite{Pandit1982}.
We summarize the main consequences of the theory for our problem in the following:

\medskip

\begin{theorem}
Assume that $P_h(t):[0,T] \to $ is BV and left-continuous, that $P_h(0) u_h(0) = u_h(0)$ (i.e., $u_h(0) \in H_h(0)$),
and that $f:H_h^0 \to H$ is locally Lipschitz.
Then
\begin{enumerate}[label=(\roman{*}), ref=(\roman{*})]
\item[(i)]
There exists $T^*$, $0 < T^* \leq T$, such that the integral equation 
\begin{equation}
\label{eq:burgers_dynamic_galerkin_integral0}
u_h(t) = u_h(0) + \int_0^t P_h(\tau) f(u_h(\tau)) \mathrm{d}\tau +  \int_0^t \mathrm{d} P_h(\tau) u_h(\tau)
\end{equation}
has a unique BV, left-continuous solution $u_h : [0,T^*] \to H_h^0$.
\item[(ii)]
This solution satisfies
\begin{equation}
\forall t \in [0,T], P_h(t) u_h(t) = u_h(t)
\end{equation} 
\item[(iii)]
$u_h$ is continuous at any point of continuity of $P_h$,
and more generally for any $t$:
\begin{equation}
u_h(t^+) - u_h(t) = ( P_h(t^+) - P_h(t) ) u_h(t)
\end{equation}
or equivalently
\begin{equation}
u_h(t^+) = P_h(t^+) u_h(t)
\end{equation}
\item[(iv)]
The energy equation (\ref{eq:energy_equation}) for smooth $P_h$ is replaced in general by:
\begin{multline}
\label{eq:energy_equation2}
\frac{1}{2} (\Vert u_h(t) \Vert^2-\Vert u_h(0) \Vert^2) = \\ \int_0^t (u_h(\tau), f(u_h(\tau)))\mathrm{d}\tau - \frac{1}{2} \sum_{\{i\mid t_i < t\}} \Vert (1-P_h(t_i^+)) u_h(t_i) \Vert^2,
\end{multline}
where $(t_i)_{i \in \NN}$ are the points of discontinuity of $P_h$.
\end{enumerate}
\end{theorem}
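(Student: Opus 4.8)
The plan is to recognise the integral equation \eqref{eq:burgers_dynamic_galerkin_integral0} as a \emph{generalized ordinary differential equation} in the sense of Kurzweil, and to derive the four assertions from the existence, uniqueness and structural theorems collected in \cite{Schwabik1992}. Concretely I would introduce
\[
F(\vv,t) = \int_0^t P_h(s)\, f(\vv)\,\mathrm{d}s + P_h(t)\,\vv,
\]
defined for $\vv$ in a fixed closed ball of $H_h^0$ and $t \in [0,T]$, and observe that the Kurzweil integral $\int_0^t DF(u_h(\tau),\tau)$ reproduces exactly the two integral terms of \eqref{eq:burgers_dynamic_galerkin_integral0}: the frozen-state increment $F(u_h(\xi_j),t_j)-F(u_h(\xi_j),t_{j-1})$ splits into $\int_{t_{j-1}}^{t_j} P_h(s) f(u_h(\xi_j))\,\mathrm{d}s$ and $(P_h(t_j)-P_h(t_{j-1}))\,u_h(\xi_j)$. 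Thus the problem takes the canonical form $u_h(t)=u_h(0)+\int_0^t DF(u_h(\tau),\tau)$.

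First I would check that $F$ belongs to the admissible class of \cite{Schwabik1992}: one needs a nondecreasing function $t\mapsto h(t)$ controlling the increments of $F$ in $t$ and a Lipschitz-type bound on the double increments in $(\vv,t)$. Because $P_h$ is a projector, so that its norm equals one, and because $f$ is locally Lipschitz, hence bounded on the ball, one obtains
\[
\Vert F(\vv,t_2)-F(\vv,t_1)\Vert \le C\,(t_2-t_1) + \mathrm{var}_{[t_1,t_2]}(P_h),
\]
so that $h(t)=Ct+\mathrm{var}_{[0,t]}(P_h)$ works; it is nondecreasing and, crucially, left-continuous since $P_h$ is. The local Lipschitz property of $f$ then yields the matching bound on the double increment. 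Schwabik's theorem delivers a unique left-continuous BV solution on some interval $[0,T^*]$, $0<T^*\le T$; it is precisely the locality of the Lipschitz bound that limits $T^*$, giving assertion~(i).

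Assertions (ii) and (iii) I would read off from the jump structure of generalized ODEs. The general theory gives the jump relation $u_h(t^+)-u_h(t)=F(u_h(t),t^+)-F(u_h(t),t)=(P_h(t^+)-P_h(t))\,u_h(t)$, which is precisely (iii). For (ii) I would propagate the constraint $(1-P_h(t))u_h(t)=0$ forward from $t=0$: across a jump, (iii) gives $u_h(t^+)=P_h(t^+)u_h(t)$, whence $(1-P_h(t^+))u_h(t^+)=0$; on the continuity intervals the integral analogue of the computation in Lemma~\ref{thm:smooth_projector}, namely $\mathrm{d}\big((1-P_h)u_h\big)=0$, keeps the constraint intact. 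Left-continuity of both $P_h$ and $u_h$ then transfers $(1-P_h(t))u_h(t)=0$ to every $t$, including accumulation points of the countable jump set.

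The crux, and the step I expect to be most delicate, is the energy balance (iv). Here I would apply the substitution rule for Stieltjes integrals to the scalar BV function $\tau \mapsto \Vert u_h(\tau)\Vert^2$ after splitting $\mathrm{d}P_h$ into its continuous and its jump parts. The continuous part reproduces the smooth computation of Lemma~\ref{thm:smooth_projector}: using (ii) and self-adjointness, $(u_h,P_h f(u_h))=(u_h,f(u_h))$, while the remaining Stieltjes contribution $\int_0^t (u_h,\mathrm{d}P_h^{\,c}\,u_h)$ vanishes because the idempotency algebra $P_h\,\mathrm{d}P_h^{\,c}\,P_h=0$ survives for the continuous part; this leaves $\int_0^t (u_h(\tau),f(u_h(\tau)))\,\mathrm{d}\tau$. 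At each discontinuity $t_i$, (iii) gives $\Delta u_h:=u_h(t_i^+)-u_h(t_i)=-(1-P_h(t_i^+))u_h(t_i)$, and the exact expansion $\Vert u_h(t_i^+)\Vert^2-\Vert u_h(t_i)\Vert^2 = 2(u_h(t_i),\Delta u_h)+\Vert \Delta u_h\Vert^2$ collapses, by orthogonality of the projector, to $-\Vert(1-P_h(t_i^+))u_h(t_i)\Vert^2$. Summing over jumps and adding the continuous part yields \eqref{eq:energy_equation2}. The hard part will be to make this splitting rigorous inside the Kurzweil--Stieltjes calculus: one must justify that the continuous-part identity $P_h\,\mathrm{d}P_h^{\,c}\,P_h=0$ holds for a merely BV projector path, the jump corrections in the integration-by-parts formula being exactly what is isolated in the sum, and one must verify that the series $\sum_i \Vert(1-P_h(t_i^+))u_h(t_i)\Vert^2$ converges, which follows from the BV bound on $u_h$.
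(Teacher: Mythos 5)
Your proposal is correct in substance and, for part (i), follows the same route as the paper: both cast \eqref{eq:burgers_dynamic_galerkin_integral0} as a generalized ODE in Schwabik's class $\mathcal{F}(G,h,\omega)$ and invoke his existence (Schauder--Tichonov) and local forward-uniqueness theorems; your explicit choice $h(t)=Ct+\mathrm{var}_{[0,t]}(P_h)$ and the verification that it is left-continuous is exactly the detail the paper leaves implicit. Where you genuinely diverge is in (ii) and (iv). For (ii) the paper approximates $P_h$ by smooth projector paths $P_{h,\varepsilon}$, applies Lemma~\ref{thm:smooth_projector}, and passes to the limit via continuous dependence on parameters; you instead propagate the constraint $(1-P_h)u_h=0$ directly, across jumps via the relation in (iii) and on continuity intervals via the Stieltjes product rule. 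Your route avoids a real difficulty in the paper's: a mollification of a BV projector path is not itself projector-valued, so the smoothing argument needs an extra retraction onto the projector manifold that the paper does not supply. The price you pay is that $\mathrm{d}\bigl((1-P_h)u_h\bigr)=0$ is not literally an identity; the computation gives $\mathrm{d}\bigl((1-P_h)u_h\bigr)=-P_h\,\mathrm{d}P_h\,(1-P_h)u_h$ (using $P_h\,\mathrm{d}P_h\,P_h=0$ for the continuous part), and one concludes by a Stieltjes--Gronwall estimate from the zero initial datum --- a gap, but one the paper's own smooth Lemma shares. For (iv) the paper merely integrates the smooth energy identity \eqref{eq:energy_eq2} and ``replaces $P_h'u_h$ by $(1-P_h)u_h'$,'' which does not literally parse for discontinuous $P_h$; your decomposition into a continuous Stieltjes part (annihilated by $P_h\,\mathrm{d}P_h^{\,c}\,P_h=0$) plus the exact jump expansion $\Vert u_h(t_i^+)\Vert^2-\Vert u_h(t_i)\Vert^2=2(u_h(t_i),\Delta u_h)+\Vert\Delta u_h\Vert^2=-\Vert(1-P_h(t_i^+))u_h(t_i)\Vert^2$ is the computation that actually produces the factor $\tfrac12$ and the squared norm in \eqref{eq:energy_equation2}, and the convergence of the jump series from the BV bound is a point worth keeping. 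In short: same skeleton for existence and uniqueness, but a more self-contained and, for (ii) and (iv), more rigorous argument than the one in the paper.
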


\medskip

\begin{proof}
To prove part (i) of the theorem we first need to familiarize ourselves with a few key concepts used by \cite{Schwabik1992}.
\begin{definition}
Let $G = \{ x \in \RR^n \mid \Vert x \Vert \leq c \} \times [0,T]$,
$h : [0,T] \to \RR$ a non decreasing, continuous from the left function,
and $\omega : [0,+\infty) \to \RR$ a continuous, increasing function with $\omega(0) = 0$.

We will say that a function $F : G \to \RR^n$ belongs to the class $\mathcal{F}(G,h,\omega)$, 
if and only if
\begin{equation}
\Vert F(x,t_2) - F(x,t_1) \Vert \leq \vert h(t_2) - h(t_1) \vert
\end{equation}
and
\begin{equation}
\Vert F(x,t_2) - F(x,t_1) - F(y,t_2) + F(y,t_1) \Vert \leq \omega(\Vert x - y \Vert) \vert h(t_2) - h(t_1) \vert
\end{equation}
for all $(x,t_2), (x,t_1), (y,t_2), (y,t_1) \in G$.
\end{definition}

\medskip

The proof of the existence is based on the Schauder-Tichonov fixed point theorem, using theorem 4.2, p. 114 of ref.~\cite{Schwabik1992}.
The uniqueness can be shown using theorem 4.8, page 122 of ref.~\cite{Schwabik1992} proving the local uniqueness property in the future, i.e., for increasing $t$.

Now let us turn to (ii).
The idea is to approximate $P_h$ by a family of smooth functions $P_{h,\varepsilon}$, $\varepsilon > 0$,
and then to apply Lemma \ref{thm:smooth_projector} to the corresponding solution $u_{h,\varepsilon}$,
giving
\begin{equation}
\left(1-P_{h,\varepsilon}(t) \right) \, u_{h,\varepsilon}(t) = 0
\end{equation}
and then passing to the limit.
For this we need $u_{h,\varepsilon}(t) \to u_{h}(t)$,
which means that the solution depends continuously on $P_h$ (see chapter 8 p. 262 : continuous dependence on parameters).

The continuity of $u_h$ in part (iii) follows directly from the fact that $P_h$ is left-continuous and BV. 

The energy equation in part (iv) can be shown by integrating (\ref{eq:energy_eq2}) in time and replacing $P_h'(t) u_h(t)$ by $(1 - P_h(t)) u'_h(t)$, cf. (\ref{eq:burgers_dynamic_galerkin_2}).
\end{proof}


In the case when the projector $P_h(t)$ depends on $u(t)$, e.g., when using adaptive wavelet thresholding, we have,

\begin{subequations}
\label{eq:coupled_dynamical_galerkin}
\begin{align}
\label{eq:coupled_dynamical_galerkin1}
u_h(t) & = u_h(0) + \int_0^t P_h(\tau) f(u_h(\tau)) \mathrm{d}\tau +  \int_0^t \mathrm{d} P_h(\tau) u_h(\tau) \\
\label{eq:coupled_dynamical_galerkin2}
P_h(t) & = \Phi(u_h(t))
\end{align}
\end{subequations}

\begin{theorem}
Under certain conditions, the system (\ref{eq:coupled_dynamical_galerkin}) has a unique solution.
\end{theorem}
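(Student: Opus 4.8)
The plan is to read \eqref{eq:coupled_dynamical_galerkin} as a single self-referential generalized ODE for $u_h$ by eliminating the projector. Substituting the constitutive law \eqref{eq:coupled_dynamical_galerkin2} into \eqref{eq:coupled_dynamical_galerkin1} gives
\begin{equation}
u_h(t) = u_h(0) + \int_0^t \Phi(u_h(\tau)) f(u_h(\tau)) \, \mathrm{d}\tau + \int_0^t \mathrm{d}\!\left[\Phi(u_h(\tau))\right] u_h(\tau),
\label{eq:coupled_closed}
\end{equation}
where the last term is a Stieltjes integral whose driving path $\tau \mapsto \Phi(u_h(\tau))$ is now itself an unknown built from the solution. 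This places us outside the hypotheses of the preceding theorem, where the projector path was prescribed, and the whole difficulty is concentrated in this state-dependent Stieltjes term.

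First I would fix the functional setting: the space $X = BV([0,T^*]; H_h^0)$ of left-continuous functions of bounded variation, complete under the norm combining the sup-norm with the total variation, and over which the generalized ODE theory of \cite{Schwabik1992} operates. The ``certain conditions'' I would impose are (a) $f : H_h^0 \to H$ locally Lipschitz, as already used above; (b) $\Phi : H_h^0 \to \Pi_h^0$ Lipschitz with respect to the operator norm on $\Pi_h^0$; and (c) the compatibility condition $\Phi(u_h(0)) u_h(0) = u_h(0)$. Condition (b) is what guarantees that the composition $\tau \mapsto \Phi(u_h(\tau))$ inherits bounded variation from $u_h$, with $\mathrm{var}(\Phi \circ u_h) \le \mathrm{Lip}(\Phi)\,\mathrm{var}(u_h)$, so that \eqref{eq:coupled_closed} is well posed in $X$.

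The strategy is then a fixed-point argument. I would define the map $\mathcal{T} : X \to X$ sending $v \mapsto u_h$, where $u_h$ is the unique solution furnished by the preceding theorem to the \emph{linear-in-measure} equation obtained by freezing the projector path at $Q := \Phi(v)$; a fixed point of $\mathcal{T}$ is exactly a solution of the coupled system. On a sufficiently short interval $[0,T^*]$ I would show that $\mathcal{T}$ preserves a closed ball (using the local Lipschitz bound on $f$ and a bound on $\mathrm{var}(Q)$) and that it is a contraction. The contraction estimate splits into two parts: the Lebesgue term is handled by Lipschitz continuity of $f$ with Gronwall-type control, while the Stieltjes term is controlled by the continuous dependence of generalized-ODE solutions on their driving function (chapter~8 of \cite{Schwabik1992}) together with condition (b), which makes $v \mapsto \Phi(v)$ Lipschitz from $X$ into the space of admissible projector paths. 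Shrinking $T^*$ drives the Lipschitz constant of $\mathcal{T}$ below one, so Banach's fixed-point theorem yields local existence and uniqueness; the solution is then continued as long as it remains in $H_h^0$, and it inherits the jump relation $u_h(t^+) = \Phi(u_h(t))\, u_h(t)$ and the dissipative energy balance \eqref{eq:energy_equation2} of the preceding theorem.

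The main obstacle is condition (b), and it is genuine rather than technical. Hard wavelet thresholding---the very operation motivating \eqref{eq:coupled_dynamical_galerkin2}---is \emph{not} Lipschitz, nor even continuous: $\Phi(u_h(t))$ jumps precisely when a coefficient crosses the threshold, and these crossing times depend on the unknown trajectory. A small perturbation of $v$ can move, create, or annihilate such jumps, so $v \mapsto \mathrm{var}(\Phi(v))$ is not Lipschitz and the naive contraction fails. To retain the theorem in the thresholding case I would either (i) replace hard thresholding by a Lipschitz (soft-thresholding-type) variant, recovering (b) directly, or (ii) impose a transversality assumption---that along the solution each coefficient crosses the threshold with nonvanishing speed and only finitely often on $[0,T^*]$---so that the jump times depend continuously on the trajectory and their number is locally constant. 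Under either hypothesis the Stieltjes term varies continuously with $v$ and the fixed-point scheme closes; absent such a condition, uniqueness can genuinely break down, which is exactly why the theorem must be stated ``under certain conditions.''
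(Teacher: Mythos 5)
Your proposal takes essentially the same route as the paper: the paper's proof also proceeds by iteration, defining $P_h^{n+1}(t)=\Phi(u_h^n(t))$ and $u_h^{n+1}$ as the solution of \eqref{eq:coupled_dynamical_galerkin1} with the frozen projector $P_h^{n+1}$, which is exactly the Picard iteration of your map $\mathcal{T}$. In fact you go further than the paper, which stops after defining the iteration and never establishes convergence or names the ``certain conditions''; your identification of a Lipschitz (or transversality) hypothesis on $\Phi$ as the crux, and your observation that hard thresholding violates it, supply precisely what the paper's sketch leaves open.
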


\begin{proof}
We proceed by iteration.
Let $P_h^0$ be the projector on the time-independent approximation space $H_h^0$,
$u_h^0$ be the corresponding solution of (\ref{eq:coupled_dynamical_galerkin1}).
We then define recursively 
\begin{equation}
P_h^{n+1}(t) = \Phi(u_h^n(t))
\end{equation}
and $u_h^{n+1}$ as the solution of (\ref{eq:coupled_dynamical_galerkin1}) with $P_h = P_h^{n+1}$.
\end{proof}

  \section{Space and time discretization}
  \label{sec:discret}
For space discretization in the numerical results below we use a classical Fourier pseudo-spectral scheme \cite{CQHZ88}.
The spectral Fourier projection of $u \in L^1(\TT^d)$ where $\TT = \RR / (2 \pi \ZZ)$ is given by
\begin{equation}
P_N u (\bm x) = u_N (\bm x) = \sum_{|{\bm k}| \lesssim N/2} \widehat u_k \, e^{i {\bm k} \cdot {\bm x}} \; , \; \widehat u_{\bm k} = \frac{1}{(2 \pi)^d} \int_{\TT^d}\, u({\bm x}) \, e^{-i {\bm k}  \cdot {\bm x}} \, d{\bm x}
\label{eq:Fourierprojector}
\end{equation}
Note that $|{k}| \lesssim N/2$ is understood in the sense $-N/2 \le k < N/2$ and correspondingly in higher dimensions for each component of $\bm k$.

Applying the spectral discretization to the one-dimensional inviscid Burgers equation ($d=1$),
\begin{equation}
\partial_t u + \frac{1}{2} \partial_x u^2 \, = \, 0  \quad {\text{for}} \quad x \in \TT \quad {\text{and }} \quad t>0 
\label{eq:inviscidBurgers}
\end{equation}
with periodic boundary conditions and suitable initial condition $u(x,t=0) = u_0(x)$ yields the Galerkin scheme
\begin{equation}
\partial_t u_N + \frac{1}{2} \partial_x \left( P_N (u_N)^2 \right) \, = \, 0  \quad {\text{for}} \quad x \in \TT \quad {\text{and }} \quad t>0 
\end{equation}
which corresponds to a nonlinear system of $N$ coupled ODEs for $\widehat u_k(t)$ with $|{k}| \lesssim N/2$.
A pseudo-spectral evaluation of the nonlinear term is utilized, and the product in physical space is fully dealiased. In other words, the Fourier modes retained in the expansion of the solution are such that $|k| \le k_C$, where $k_C$ is the desired cut-off wave number, but the grid has $N= 3k_C$ points in each direction, versus $N= 2k_C$ for a non-dealiased, critically sampled product. This dealiasing makes the pseudo-spectral scheme equivalent to a Fourier-Galerkin scheme up to round-off errors \cite{CQHZ88}, and is thus conservative. 

For the two- and three-dimensional incompressible Euler equations ($d=2, 3$) with periodic boundary conditions, 
\begin{eqnarray}
\label{eq:Euler}
\partial_t {\bm u} + \left( {\bm u} \cdot \nabla \right) {\bm u}  \, & = & \, - \nabla p  \quad {\text{for}} \quad {\bm x} \in \TT^d \quad {\text{and }} \quad t>0  \\ \nonumber
\nabla \cdot {\bm u} & = & 0
\end{eqnarray}
a similar spectral discretization can be applied. The pressure $p$ is eliminated using the Leray projection onto divergence free vector fields. Eventually a nonlinear system of coupled ODEs is obtained for the Fourier coefficients of the velocity $\widehat{\bm u}_{\bm k}(t)$.

For time discretization of the resulting ODE systems we stick to classical Runge-Kutta schemes, of order 4 for the 1D Burgers equation and the 3D Euler equations, while for 2D Euler 3rd order Runge-Kutta with a low storage formulation is used, see \cite{Orlandi2000}, on page 20.
For details on the convergence and stability of the above spectral schemes we refer to \cite{BaTa13}.
Implementation features for the 1D Burgers equation and the 2D Euler equation can be found in \cite{Nguyenvanyen2009} and \cite{PNFS13}.
For details on the scheme for 3D Euler we refer to \cite{Farge2017}.
\begin{figure}
\begin{center}
\includegraphics[width=0.9\textwidth]{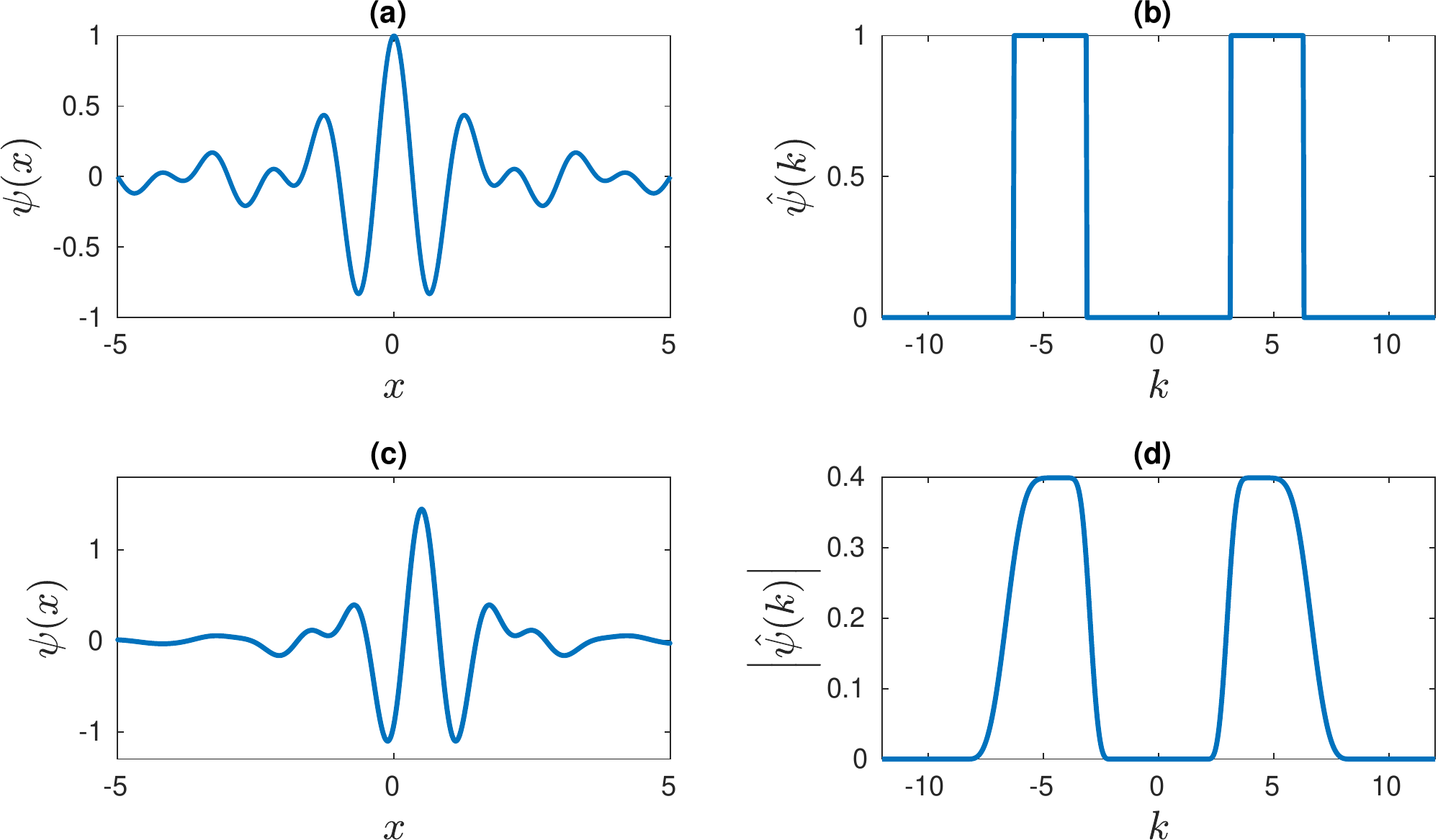}
\end{center}
\caption{Shannon wavelet (top) and Meyer wavelet (bottom) in physical space $\psi(x)$ (left) and the corresponding modulus of the Fourier transform $|\widehat{\psi}(k)|$ (right).\label{fig:wavelet}}
\end{figure}
%

The Fourier space discretization described above could be replaced by any other Galerkin discretization, using for instance finite elements, or wavelets as basis functions. The interest of using wavelets is to introduce adaptive discretizations, see e.g., \cite{ScVa10, ESRF2021}. In this case the projector $P$ is changing over time and is non smooth, which means that dissipation is introduced by removing/adding basis functions during the time stepping. This technique has been previously used for regularizing the Burgers equation and the incompressible Euler equations without a rigorous mathematical justification.

To test the influence of wavelet thresholding we introduce the concept of pseudo-adaptive simulations. The Fourier Galerkin discretization is used to solve the PDE, but in each time step the numerical solution $u_N$ is decomposed into a periodic orthogonal wavelet series  of $L^2(\TT^d)$. For $d=1$ we thus have the 1D truncated wavelet series 
\begin{equation}
P_J u_N(x) = u^J_N(x) = \overline u_{00} + \sum_{j=0}^{J-1} \sum_{i=0}^{2^j -1} \widetilde u_{ji} \psi_{ji} (x)  \, , \quad \widetilde u_{ji} = \int_{\TT} u_N(x) \psi_{ji} (x) dx
\label{eq:OWS}
\end{equation}
where $\overline u_{00}$ is the mean value of the solution and $\widetilde u_{ji}$ its wavelet coefficients. The wavelet $\psi_{ji}(x) = 2^{j/2} \psi(2^j x - i)$ quantifies fluctuations at scale $2^{-j}$ around position $i/2^j$  and $N=2^J$ denotes the total number of grid points, corresponding to the finest resolution.
Figure~\ref{fig:wavelet} illustrates Shannon and Meyer wavelets together with the corresponding Fourier transforms, which have compact support. This implies that both are trigonometric polynomials and can be spanned by a Fourier basis.
For extensions to higher dimensions using tensor product constructions of wavelets, we refer to the literature \cite{daubechies1992}.

Wavelet filtering, which is the basis of the Coherent Vorticity Simulation (CVS) \cite{FSK1999}, introduces a sparse representation of the solution, by removing weak wavelet coefficients.
Thresholding of the wavelet coefficients with a threshold $\epsilon$, which typically depends on time, is performed.
This yields a projection of the numerical solution $u_N$
\begin{equation}
P_J^{\epsilon} u_N(x) = u^J_{\epsilon} (x) = \overline u_{00} + \sum_{j=0}^{J-1} \sum_{i=0}^{2^j -1} \rho_{\epsilon}\left(\widetilde u_{ji} \right) \psi_{ji} (x)  \, ,
\label{eq:OWS_epsilon}
\end{equation}
where $\rho_{\epsilon}$ is the (hard) thresholding operator defined as,
\begin{equation}
\rho_{\epsilon}( x) \, = \,   \left \{
    \begin{array} {ll}
        x \quad \quad \quad \; \mbox{\rm for} \quad  |x| > \epsilon\\
        0 \quad \quad \quad \; \mbox{\rm for} \quad  |x| \le \epsilon\\
    \end{array}
  \right.
\label{eqn:hardthres}
\end{equation}
and $\epsilon$ denotes the threshold.
The thresholding error can be estimated (see e.g., \cite{Cohen00}) and we have $$ || P_J u_N(x) - P_J^{\epsilon} u_N(x) ||_2 \le C \epsilon \, . $$

\medskip

Using pseudo-adaptive simulations the CVS algorithm can be summarized as follows \cite{PNFS13}:

\medskip

\begin{itemize}
    \item[i)] The Fourier coefficients of the solution $\widehat u_k$ for $|{k}| \lesssim N/2$ are advanced in time to $t=t_{n+1}$ and an inverse Fourier transform is applied on a grid of size $N$ to obtain $u_N$.
    \item[ii)] A forward wavelet transform is performed to obtain $P_J u_N(x)$, according to equation (\ref{eq:OWS}).
    \item[iii)] CVS filtering removes wavelet coefficients having magnitude below the threshold $\epsilon$. The threshold value is determined iteratively~\cite{azzalini2005} and initialized with $\epsilon_0 = q \sqrt{||u||_2 / 2 / N}$ where $q$ is a compression parameter. The iteration steps are then obtained by $\epsilon_{s+1} = q \sigma[\widetilde u^{s}_{ji}]$ until $\epsilon_{s+1} = \epsilon_s$, where $\widetilde u^{s}_{ji}$ are the wavelet coefficients below  $\epsilon_s$ and $\sigma[\cdot]$ is the standard deviation of the set of these coefficients.
    \item[iv)] A safety zone is added in wavelet space. The index set of retained wavelet coefficients in step iii) is denoted by $\Lambda$ and for each retained wavelet coefficient indexed by $(j,i) \in \Lambda$ neighboring coefficients in position and scale (5 in the present case) are added, as illustrated in figure~\ref{fig:safetyzone}.
    \item[v)] An inverse wavelet transform is applied to the wavelet coefficients above the final threshold and a Fourier transform is then performed to obtain the Fourier coefficients of the filtered solution at time step $t_{n+1}$.
\end{itemize}

\medskip

Different choices of the wavelet basis for regularization have been tested, e.g., in \cite{PNFS13}, including various orthogonal wavelets and a Dual-Tree Complex Wavelet basis we refer to as `Kingslets' \cite{Kingslets}. 
The value of the compression parameter $q$ controls the number of discarded coefficients and in previous studies we found experimentally
the value $q=5$ for `Kingslets' (complex-valued wavelets) and for orthogonal wavelets we used $q=8$.

Adding a safety zone is necessary due to the lack of translational invariance of orthogonal wavelets, but also for local dealiasing. The idea is to keep neighboring coefficients in space and scale and to account for translation of shocks or step gradients and the generation of finer scale structures. For complex-valued wavelets, which are translation invariant, no safety zone is required, as shown in~\cite{PNFS13}.
For details and further discussion on possible choices of the safety zone we refer the reader to~\cite{OYSFK11}.

\begin{figure}
\begin{center}
\includegraphics[width=0.55\textwidth]{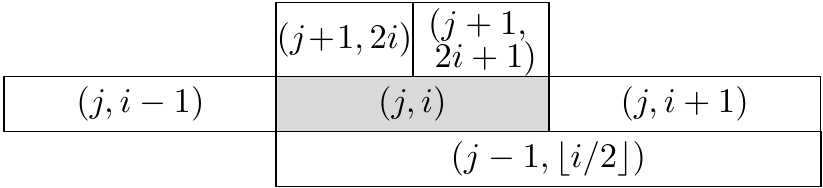}
\end{center}
\caption{Safety zone in wavelet coefficient space around an active coefficient $(j, i)$ in position $i$ and finer ($j+1$) and coarser scale ($j-1$).\label{fig:safetyzone}}
\end{figure}


\section{Numerical experiments} 
\label{sec:numex}

In the following we show results to illustrate the properties of dynamical Galerkin scheme and in particular their ability to introduce energy dissipation into the numerical method, which can be useful for stabilization.
As examples we consider first the inviscid 1D Burgers equation using  periodic boundary conditions. The initial condition is
a simple sine wave given by $u(x,t=0) = \sin(2 \pi x)$ for $x \in \TT$. Unless explicitly noted, computations are done with $N=2048$ collocation points and the time step $\Delta t$ is chosen so that $\Delta x/\Delta t = 16$, where $\Delta x = 1/N$ is the grid discretization size. This choice ensures the CFL condition is met \cite{CQHZ88}.


\begin{figure}
\begin{center}
\includegraphics[width=0.9\textwidth]{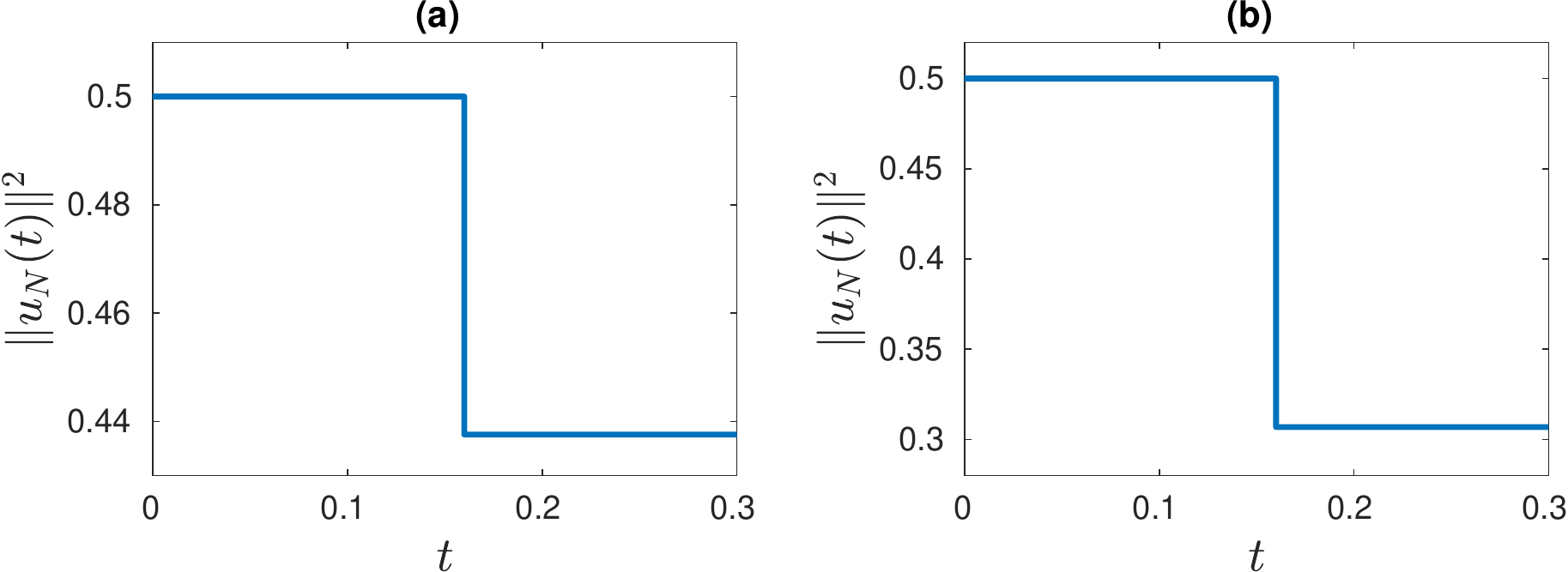}
\end{center}
\caption{Filtering of one mode in (a) Fourier space and (b) in wavelet space for the inviscid 1D Burgers equation. Time evolution of energy. As expected, energy loss is observed.\label{fig:test}}
\end{figure}
%

\begin{figure}
\begin{center}
\includegraphics[width=0.56\textwidth]{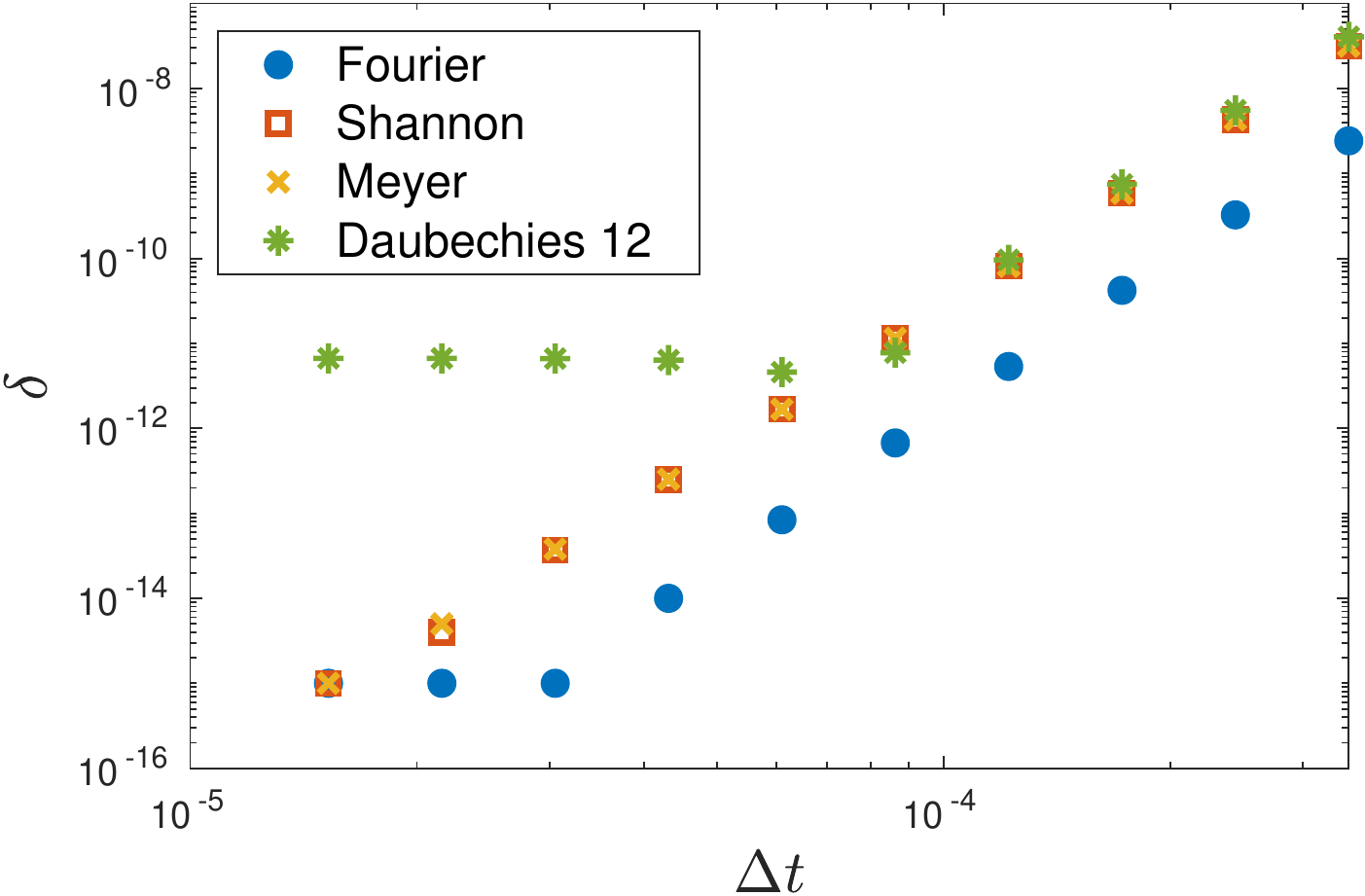}
\end{center}
\caption{Difference between dissipated energy and filtered energy (equation \ref{eq:delta}) as a function of the time step $\Delta t$, when a single Fourier mode or wavelet coefficient is filtered. A residual difference remains when Daubechies wavelets are employed. \label{fig:test2}}
\end{figure}
%

  \subsection{Punctual selection in the Fourier basis} 

The simplest illustration which we develop as a proof of concept is a punctual selection in the Fourier basis. Starting at some time instant $t_b$ and during an entire interval $[t_b, t_e]$, we set to zero the Fourier coefficients corresponding to a given wave number $k_f$ after each time step (both positive and negative modes are erased, such that the solution remains real). 
The projection operator thus becomes time dependent and discontinuous and we have
\begin{equation}
P_N(t)_{[t_b, t_e]}^{k_f} u (x) \, = \,   \left \{
    \begin{array} {ll}
        \sum_{|{k}| \lesssim N/2, |k| \ne k_f} \widehat u_k \, e^{i { k} \, {x}}\quad \quad \; \mbox{\rm for} \quad  {t} \in [t_b, t_e] \\
        \sum_{|{ k}| \lesssim N/2} \widehat u_k \, e^{i { k} \, { x}} \quad \; \, \, \quad \quad \quad  \; \mbox{elsewhere.}\\
    \end{array}
  \right.
\label{eqn:P_filter_fourier}
\end{equation}
The removal of these modes will instantly dissipate energy of the numerical solution, but from there on energy is conserved. And this is the case still after the reintroduction of the coefficients in the projection basis, despite the discontinuity of the projection operator. Indeed, according to (\ref{eq:energy_equation2}) dissipation is observed as long as $\Vert (1-P_h(t^+)) u_h(t) \Vert^2$ is non zero, but at $t=t_e$ this quantity is null and therefore energy is conserved. We note that since a 
multistage time marching scheme is employed, it is necessary to reset to zero the removed coefficients after each substage, to ensure they have no effect on the solution.

We show in figure~\ref{fig:test}(a) the time evolution of the energy when the filtering wave number is $k_f=2$. The projection operator changes at $t_b=0.16$ and is then restored at $t_e=0.2 $. Dissipation is introduced by this change of projection basis and, up to numerical errors, the lost energy amounts to the energy content of the discarded coefficients. This can be seen in figure~\ref{fig:test2}, where we plot, as a function of the time step $\Delta t$, the quantity
\begin{equation}\label{eq:delta}
\delta = (\Vert u_N(0) \Vert^2-\Vert u_N(t_b) \Vert^2) - \Vert (1-P_N(t_b^+)^{k_f}_{[t_b,t_e]}) u_N(t_b)\Vert^2,
\end{equation}
which should be zero according to (\ref{eq:energy_equation2}), since the PDE is energy conserving up to time $t_b$. One observes that $\delta$ indeed converges to zero up to machine precision (of order 10$^{-15}$) as $\Delta t$ is decreased.

  \subsection{Punctual selection in real orthogonal wavelet bases}

To illustrate dissipation through reprojection on a wavelet basis, we extend the previous idea of a punctual selection now to wavelet space. The solution of the Fourier Galerkin method is decomposed in each time step into an orthogonal wavelet basis, as in equation (\ref{eq:OWS}). One single energy containing coefficient, of scale index $j_f$ and position index $i_f$, is then set to zero after every time step during some given time interval $[t_b,t_e]$. The projection operator is once again time dependent and discontinuous, and may be written as
\begin{equation}
P_J(t)_{[t_b, t_e]}^{j_f,i_f} u (x) \, = \,   \left \{
    \begin{array} {ll}
        \overline u_{00} + \sum_{j=0}^{J-1} \sum_{i=0}^{2^j -1} \widetilde u_{ji} \psi_{ji} (x) (1-\delta_{jj_f}\delta_{ii_f})\quad \; \; \mbox{\rm for} \quad  {t} \in [t_b, t_e] \\
        \overline u_{00} + \sum_{j=0}^{J-1} \sum_{i=0}^{2^j -1} \widetilde u_{ji} \psi_{ji} (x) \quad \quad \quad \quad \quad \quad \quad \; \mbox{elsewhere,}\\
    \end{array}
  \right.
\label{eqn:P_filter_wavelet}
\end{equation}
for a chosen orthogonal wavelet $\psi_{ji}(x)$.

We show in figure~\ref{fig:test}(b) the energy time evolution for the case of projections in the Meyer wavelet basis. The filtered coefficient corresponds to $j_f=1$ and $i_f=1$. As before, the filtering happens from time $t_b=0.16$ to $t_e=0.2$. Energy is punctually dissipated as of the first change in the projector, but is otherwise conserved. Figure~\ref{fig:test2} also shows the convergence of the quantity $\delta$ from equation \ref{eq:delta}, now with the projector replaced by equation \ref{eqn:P_filter_wavelet}. Similar results are also obtained with projections onto a Shannon wavelet basis. 

Interestingly, the same convergence is not observed in figure~\ref{fig:test2} when Daubechies wavelets are used. As illustrated in figure~\ref{fig:wavelet}, working with Shannon wavelets is actually equivalent to working with the Fourier basis, since it is compactly supported in spectral space, with a sharp cut-off. Combining multiscale Shannon wavelets amounts to covering the spectral space up to some Galerkin cut-off frequency. When projecting with this basis, one is simply damping some existing Fourier coefficients without introducing new wave numbers. Hence, when going back to the fully dealiased Fourier space, no further energy is lost. The Meyer wavelet is likewise compactly supported in spectral space, however the projection onto Meyer wavelets is only equivalent to a Fourier projection when the number of Fourier modes is increased from $N$ to $3/2 N$, which is the case when dealiasing is applied. 
Therefore, in both cases the dissipated energy indeed corresponds to the energy lost due to the discontinuity of the projection operator. The Daubechies wavelet, on the other hand, is not compactly supported in spectral space. When a projection is made in wavelet space and some coefficient is discarded, this will affect wave numbers beyond the dealiased ones, which then cease to vanish. After returning to Fourier space, the dealiasing operation will set all these to zero and further energy dissipation occurs. For this reason, the quantity $\delta$ shows a residual value as the time step decreases and does not attain machine precision, as seen in figure~\ref{fig:test2}. In this simulation, Daubechies 12 wavelets were employed and the projector corresponds to equation \ref{eqn:P_filter_wavelet} with $j_f=0$ and $i_f=0$. Note that the indices are chosen so that the amount of dissipated energy is comparable in all cases.

This additional energy dissipation can once again be understood as due to a change in the projector, i.e., going from the wavelet projector removing one coefficient, given in equation \ref{eqn:P_filter_wavelet}, to the Fourier projector given in equation \ref{eq:Fourierprojector}.
In other words, it is the fact that these two projectors do not commute when Daubechies wavelets are used (or any other basis not compactly supported in Fourier space, i.e., within the fully dealiased spectral space) which leads to more dissipation then that introduced by the filtering. This shows that pseudo-adaptive simulations, such as those discussed in section~\ref{sec:discret}, must be taken with care, since they may not exactly reproduce what one would get with a fully adaptive scheme in wavelet space. Still, they are valuable tools to predict the solutions behavior in a simpler and faster setup, and we shall apply them to illustrate the introduction of dissipation in conservation laws through a dynamical Galerkin scheme.

\section{Application to the inviscid Burgers equation and incompressible Euler using CVS filtering}
\label{sec:appl}

In the following section we present in a concise way some results from the literature to illustrate the dissipation properties of adaptive Galerkin methods using CVS filtering. We show some numerical examples for the one dimensional inviscid Burgers equation including some space-time convergence and for the incompressible Euler equations in two and three dimensions. For details on the numerical simulations we refer to \cite{PNFS13} and \cite{Farge2017}.

  \subsection{Inviscid Burgers}

We consider the inviscid Burgers equation (\ref{eq:inviscidBurgers}), discretized with a
Fourier pseudo-spectral method and endowed with CVS filtering, described in section~\ref{sec:discret}, using $N=16384$ Fourier modes.
For the used sinusoidal initial condition $u(x,t=0)=\sin(2\pi t)$ the
time evolution of the reference solution, so-called entropy solution, can be easily computed with the method of characteristics, separately in each half of the domain.
Figure \ref{fig:burgers_cvs} shows the solution of the standard Fourier Galerkin method, which preserves energy, and the solution obtained with the dynamic Galerkin scheme using CVS filtering with `Kingslets'. We observe that the oscillations (also called resonances, see \cite{RFNM11}), which appear as soon as the shock is formed, are removed using CVS filtering.
This is further confirmed in figure \ref{fig:burgers_cvs_zoom} (left) where the oscillations are shown to be completely filtered out and a smooth solution close to the reference solution 
is obtained.

To assess the filtering performance, we develop a space-time convergence analysis by computing the time integrated relative $L^2$-distance from the filtered solution $u_N$ to the analytical reference solution $u_\mathrm{ref}$. We compute,
\begin{equation}\label{eq:error}
    \mathcal{E} = \int_{t_0}^{t_1} \frac{\Vert u_N(t) - u_\mathrm{ref}(t)\Vert^2}{\Vert u_\mathrm{ref}(t)\Vert^2} dt,
\end{equation}
for different space resolutions while keeping fixed the previous relation between time and space discretization, that is, $\Delta x/\Delta t = 16$. 
Since the filtering is only relevant after the shock formation, we actually start the analysis from a time right before the shock time $t_s = \inf_x \left[ -1/u'(x,0)\right] \approx 0.1592$, i.e., $t_0 = t_s - \Delta t$ and carry on the integration up to $t_1=0.3$. Results for complex-valued Kingslets and real-valued Shannon wavelets with and without the safety zone discussed in section \ref{sec:discret} are shown in figure \ref{fig:convergence}. 
We can observe that CVS with Kingslets is in excellent agreement with the reference solution, showing an $\mathcal{O}(\Delta x)$ convergence rate. Although typically one order of magnitude poorer (an under-performance that we now quantify but which has only been visually verified in \cite{PNFS13}), CVS with Shannon wavelets also shows first order convergence towards the reference solution if the safety zone is present. Meanwhile, as anticipated in section \ref{sec:discret}, figure \ref{fig:convergence}(c) shows that CVS is not able to properly regularize the solution when employing real orthogonal wavelets if a safety zone is not introduced.

\begin{figure}
\begin{center}
\includegraphics[width=0.96\textwidth]{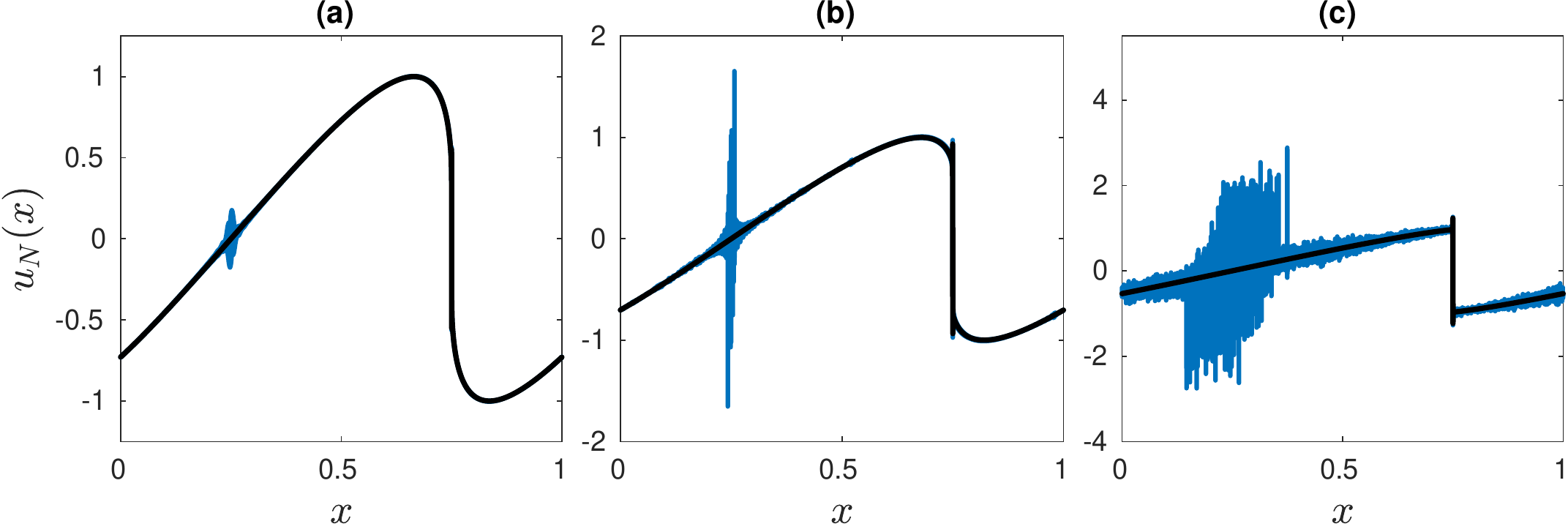}
\end{center}
\caption{CVS-filtered Galerkin truncated inviscid Burgers equation using complex-valued wavelets (Kingslets, in black) together with the non-dissipative Galerkin truncated solution (blue) at times
$t=0.1644$, $0.1793$ and $0.3$. The solutions are periodically shifted to the right, so that both the resonances and the shocks can be easily seen.\label{fig:burgers_cvs}}
\end{figure}

\begin{figure}
\begin{center}
\includegraphics[width=0.98\textwidth]{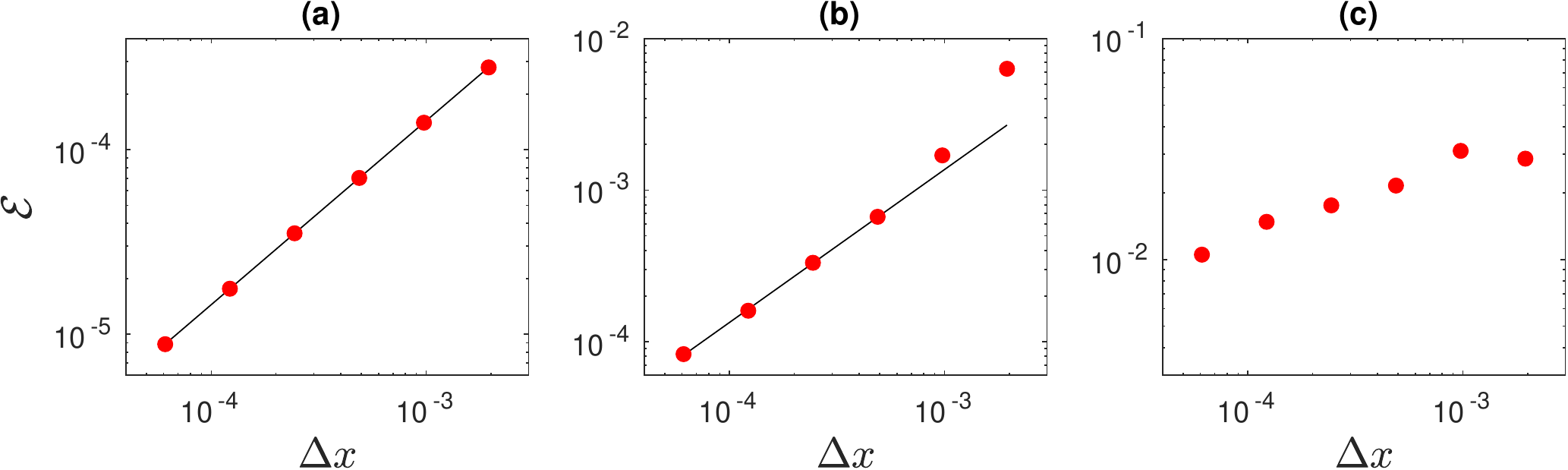}
\end{center}
\caption{Time integrated relative $L^2$-error (equation \ref{eq:error}) as a function of space resolution $\Delta x$. (a) Kingslets (b) Shannon wavelet with the safety zone (c) Shannon wavelet without the safety zone. The straight lines have slope 1.\label{fig:convergence}}
\end{figure}

\begin{figure}
\begin{center}
\includegraphics[width=0.98\textwidth]{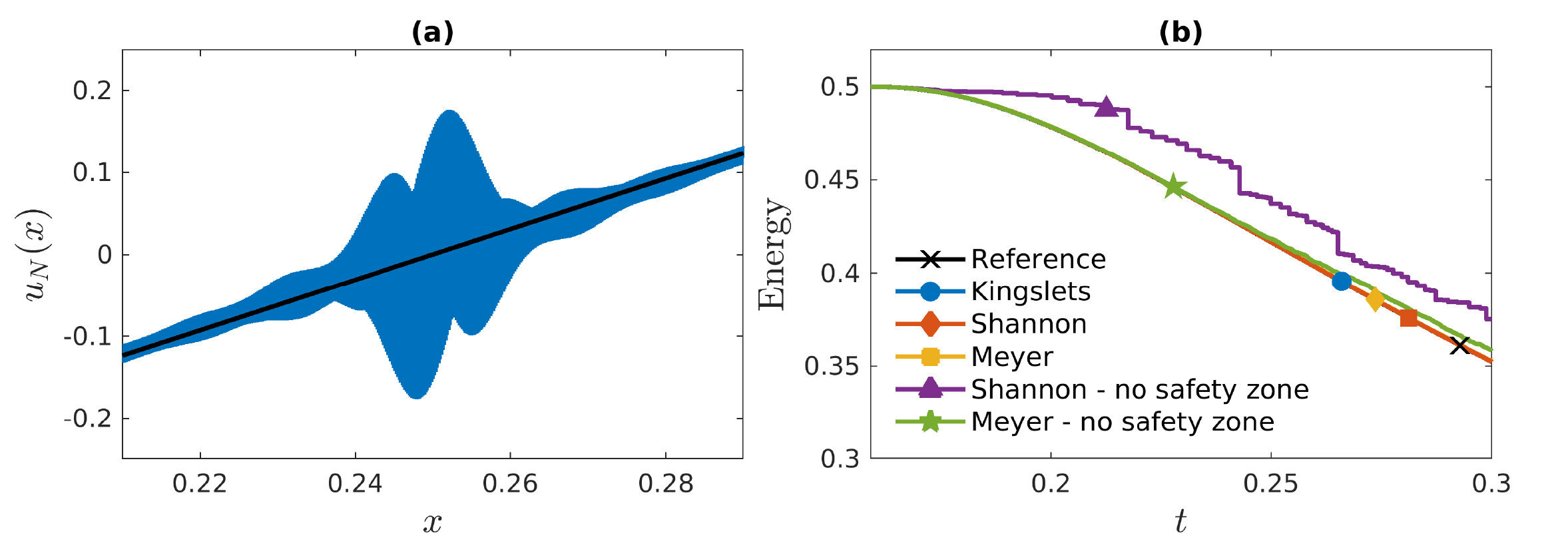}
\end{center}
\caption{(a) Detail of the solution of CVS-filtered Galerkin truncated inviscid Burgers equation using complex-valued wavelets (Kingslets, in black) together with the non-dissipative Galerkin truncated solution (blue) at time $t=0.1644$.
Right: Time evolution of the energy $E(t)$ of CVS filtered solutions for different wavelets with and without safety zone together with the analytical result. 
\label{fig:burgers_cvs_zoom}
}
\end{figure}

The evolution of the energy $E= ||u||^2$ shown in figure~\ref{fig:burgers_cvs_zoom} (right) further quantifies the dissipation of the adaptive schemes for different real orthogonal wavelets. Once again, in the presence of the safety zone the wavelet adaptation removes sufficient energy, matching thus the analytical energy evolution. However, it is now seen that without the safety zone not enough energy is dissipated and the solution is not properly regularized.
For a detailed description of similar simulations and a physical interpretation we refer to \cite{PNFS13}.

  \subsection{Incompressible Euler equations}

To illustrate the effect of dissipation when adapting the basis functions using projectors changing over time we consider the incompressible Euler equations given in (\ref{eq:Euler}) and discretize them with a classical Fourier Galerkin scheme.
In these pseudo-adaptive simulations we apply in each time step CVS filtering.
Detailed results can be found in \cite{PNFS13} and \cite{Farge2017} for the two and three-dimensional cases, respectively.

\begin{figure}[h!]
\begin{center}
\includegraphics[width=1.0\textwidth]{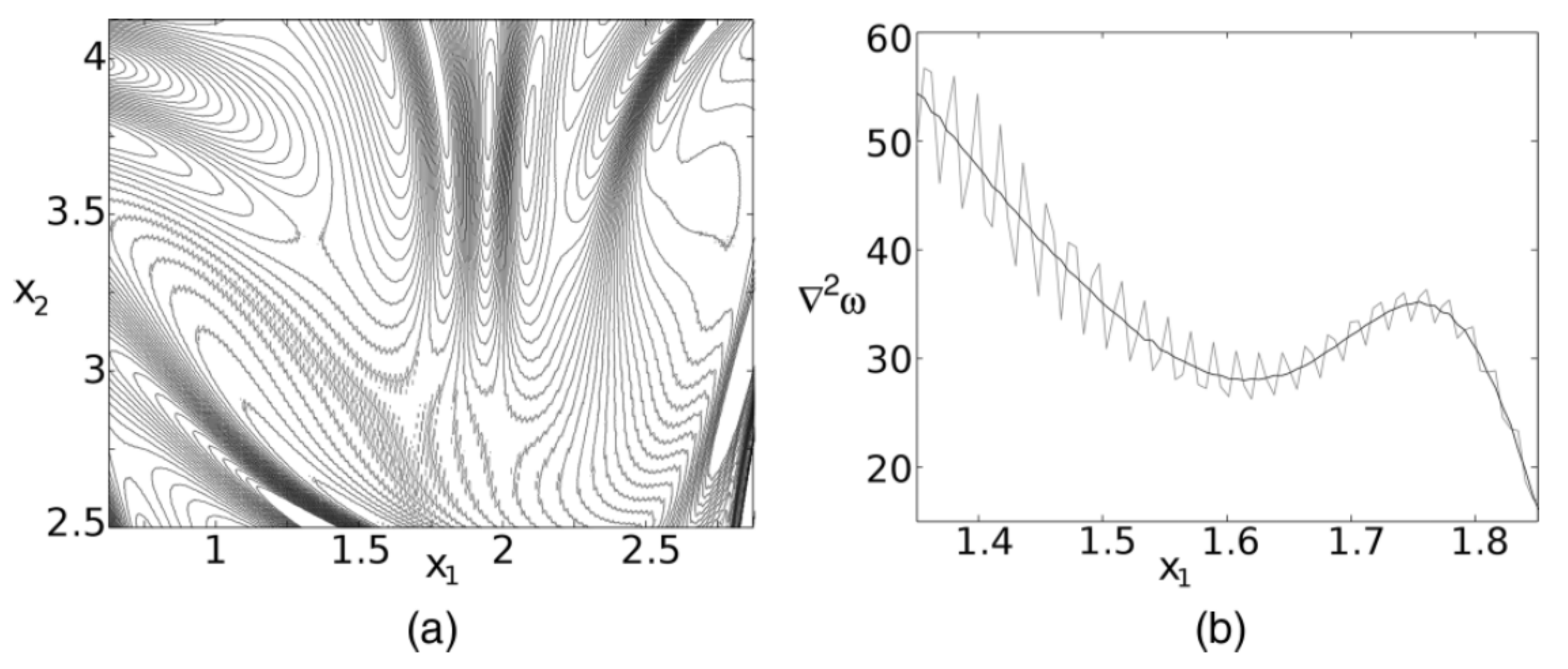}
\end{center}
\caption{Filtering of 2D incompressible Euler using complex-valued wavelets (Kingslets). Left: Contours of the Laplacian of vorticity $\Delta \omega$ at $t=0.71$. The Galerkin truncated solution is shown in gray, the CVS solution is given in black. Right: 1D cut of the Laplacian of vorticity for the oscillatory Galerkin truncated solution and the wavelet-filtered smooth solution. From \cite{PNFS13}.\label{fig:2deulerzoom}}
\end{figure}
In the two-dimensional case a random initial condition is evolved in time with third order Runge-Kutta time integration using a resolution of $N=1024^2$ Fourier modes \cite{PNFS13}.
Visualizations of the Laplacian of vorticity $\omega = \nabla \times {\bm u}$ in the fully developed nonlinear regime are shown in figure~\ref{fig:2deulerzoom} (left). For the Galerkin truncated solution we find oscillations in the isolines in $\Delta \omega$ (a small scale quantity, which is sensitive to oscillations) while the regularized solution using complex-valued wavelets with CVS filtering yields a smooth solution. A one-dimensional cut in figure~\ref{fig:2deulerzoom} (right) illustrates that in the CVS solution the oscillations have been indeed removed. Time evolution of enstrophy, defined as $\frac{1}{2} ||\omega ||_2^2$, shows that in contrast to the Galerkin truncated simulation the CVS computation is dissipative and the enstrophy departs from the one of the conservative Galerkin truncated case and it decays for times larger than 1.4. For more details including a physical interpretation we refer to \cite{PNFS13}.

\begin{figure}
\begin{center}
\includegraphics[width=0.5\textwidth]{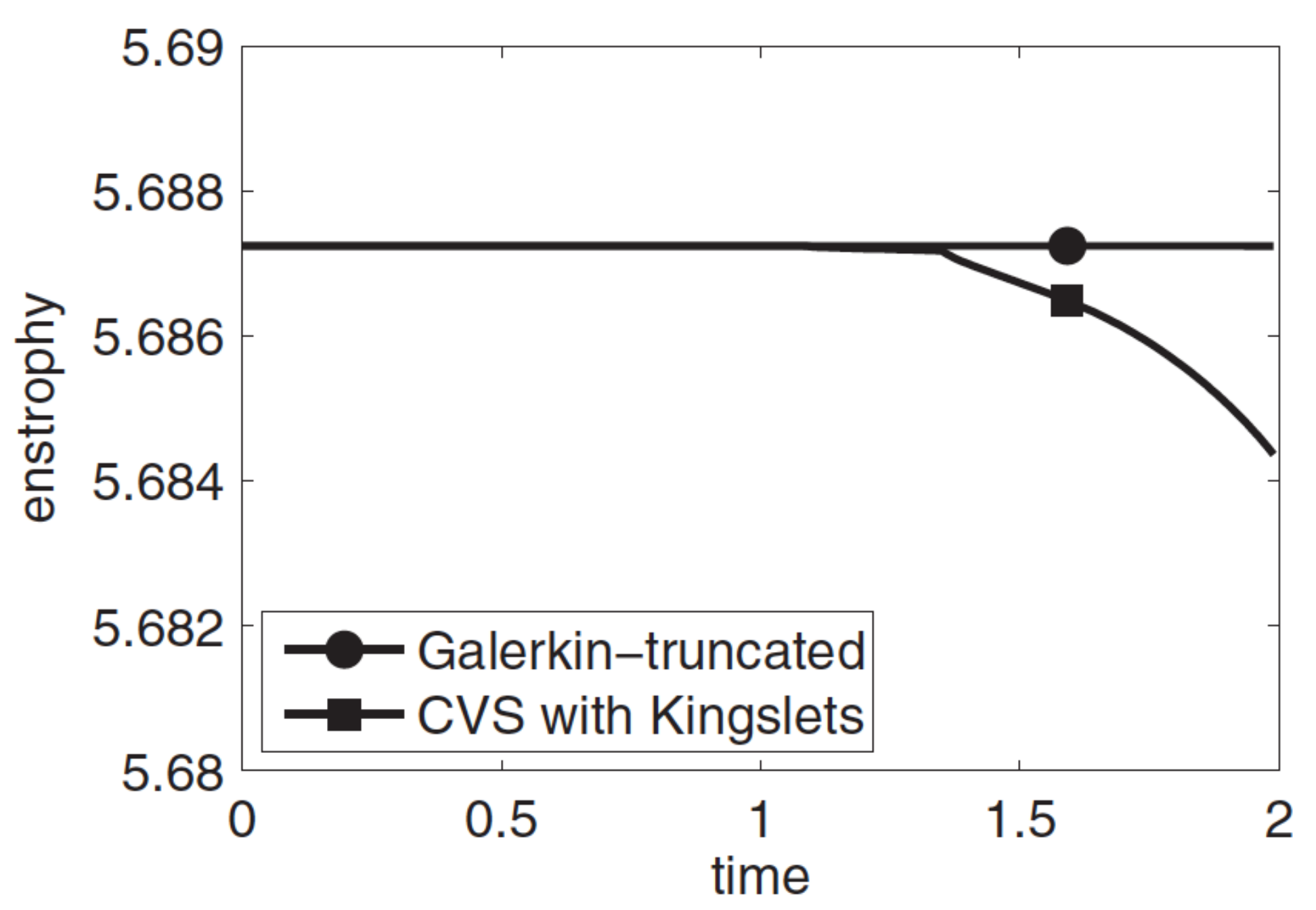}
\end{center}
\caption{Filtering of 2D incompressible Euler using complex-valued wavelets (Kingslets). Evolution of enstrophy $1/2 || \omega ||_2^2$ for the Galerkin truncated case and the adaptive wavelet filtered case using Kingslets. From \cite{PNFS13}.\label{fig:2deuler_enstrophy}}
\end{figure}

\medskip

The three-dimensional Fourier Galerkin computations of incompressible Euler have been performed at resolution $N=512^3$ in a periodic cubic domain with a fourth order Runge-Kutta scheme for time integration \cite{Farge2017}. 
A statistically stationary flow of fully developed homogeneous isotropic turbulence obtained by DNS is used as initial condition.
For CVS filtering Coiflet 12 wavelets \cite{daubechies1992} were used. Note that the wavelet decomposition and subsequent filtering have been applied to the vorticity ${\bm \omega} = \nabla \times {\bm u}$ (and not to the velocity ${\bm u}$) in each time step and subsequently the filtered velocity has been computed by applying the Biot-Savart operator $(\nabla \times)^{-1}$ in Fourier space. 

\begin{figure}
\begin{center}
\includegraphics[width=0.49\textwidth]{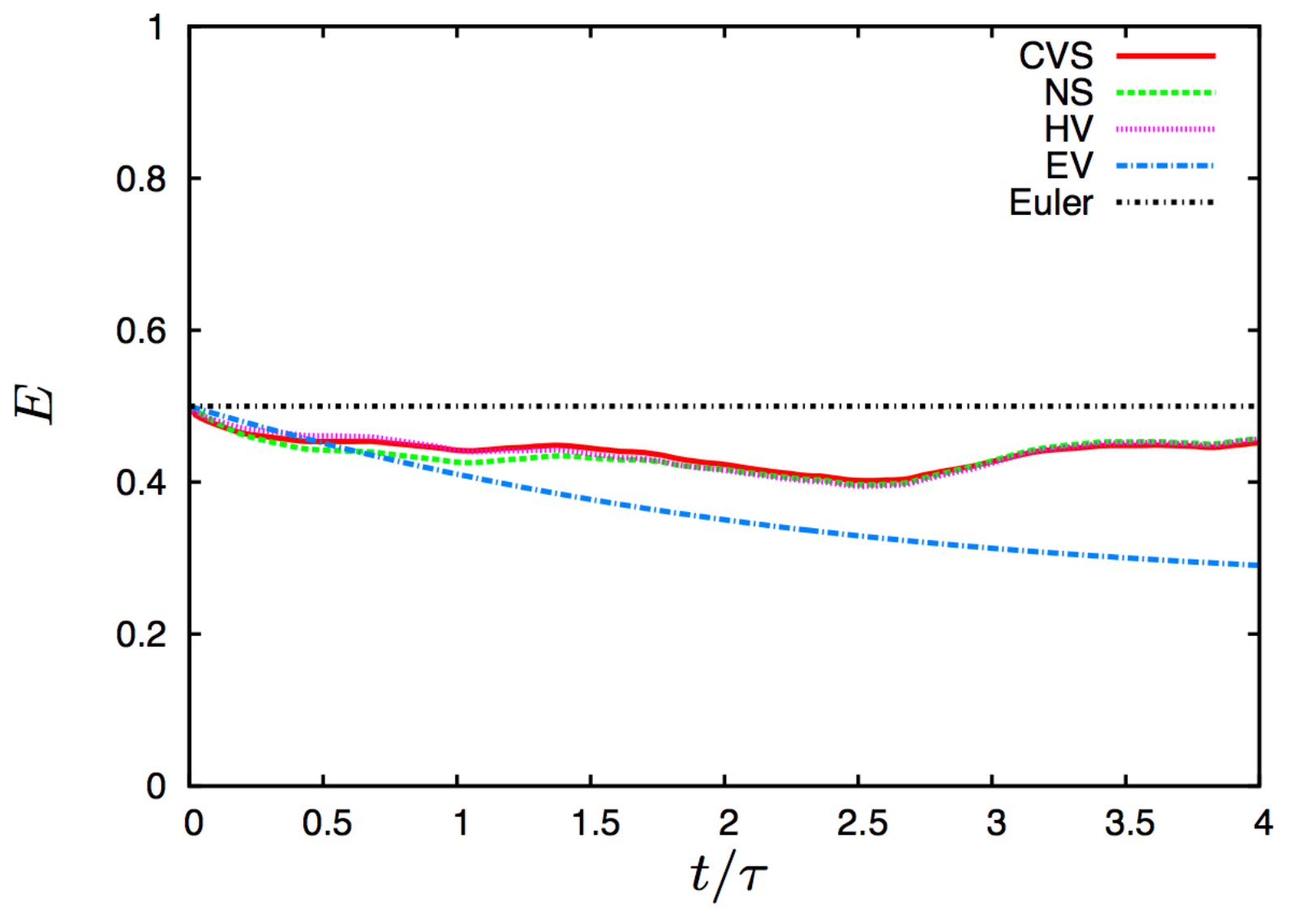}
\includegraphics[width=0.49\textwidth]{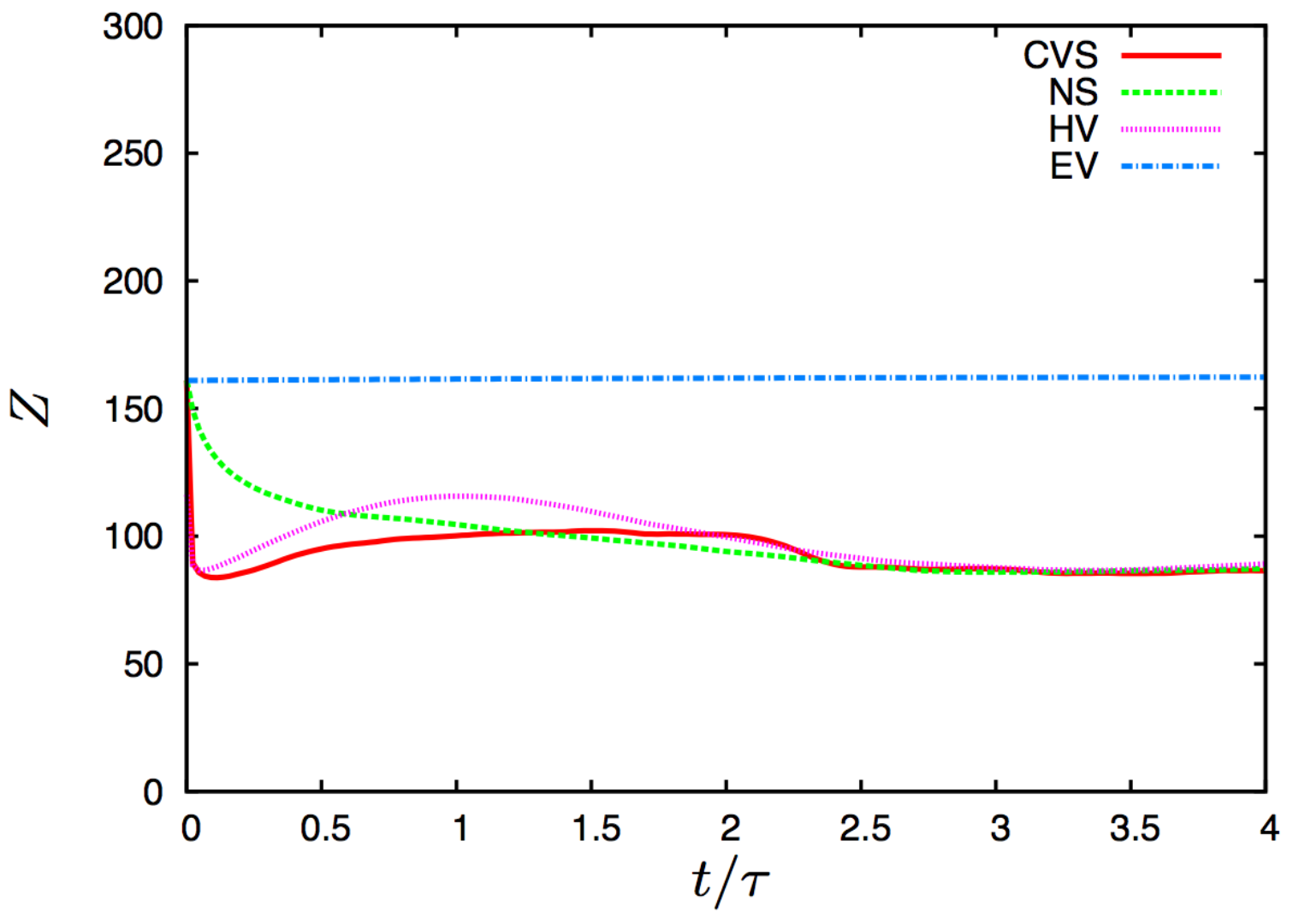}
\end{center}
\caption{Energy (left) and enstrophy (right) evolution for 3D incompressible Euler using for Galerkin truncated Euler (Euler), wavelet filtered Euler (CVS) and Navier-Stokes (NS). HV and EV stand for hyperviscous regularization and EV for Euler-Voigt, respectively, which are not discussed here. From \cite{Farge2017}.\label{fig:3deuler_energy}}
\end{figure}

The time evolution of the energy, $\frac{1}{2} || {\bm u} ||_2^2 $, and enstrophy, $\frac{1}{2} || {\bm \omega} ||_2^2 $, in figure~\ref{fig:3deuler_energy} first shows that the Galerkin truncted Euler computation preserves energy and that enstrophy grows rapidly in time due to the absence of regularization. For CVS we can observe that energy is dissipated, similar to what is observed for Navier-Stokes and that enstrophy also exhibits a similar evolution as NS and does not grow rapidly.

Visualizations of intense vorticity structures in figure~\ref{fig:3deuler_vorticty} for CVS and NS show their similar tube-like character, while the Galerkin truncated Euler solution is similar to Gaussian white noise without the presence of coherent structures.
For details including a physical interpretation of the results we refer to \cite{Farge2017}.

\begin{figure}
\begin{center}
\includegraphics[width=0.31\textwidth]{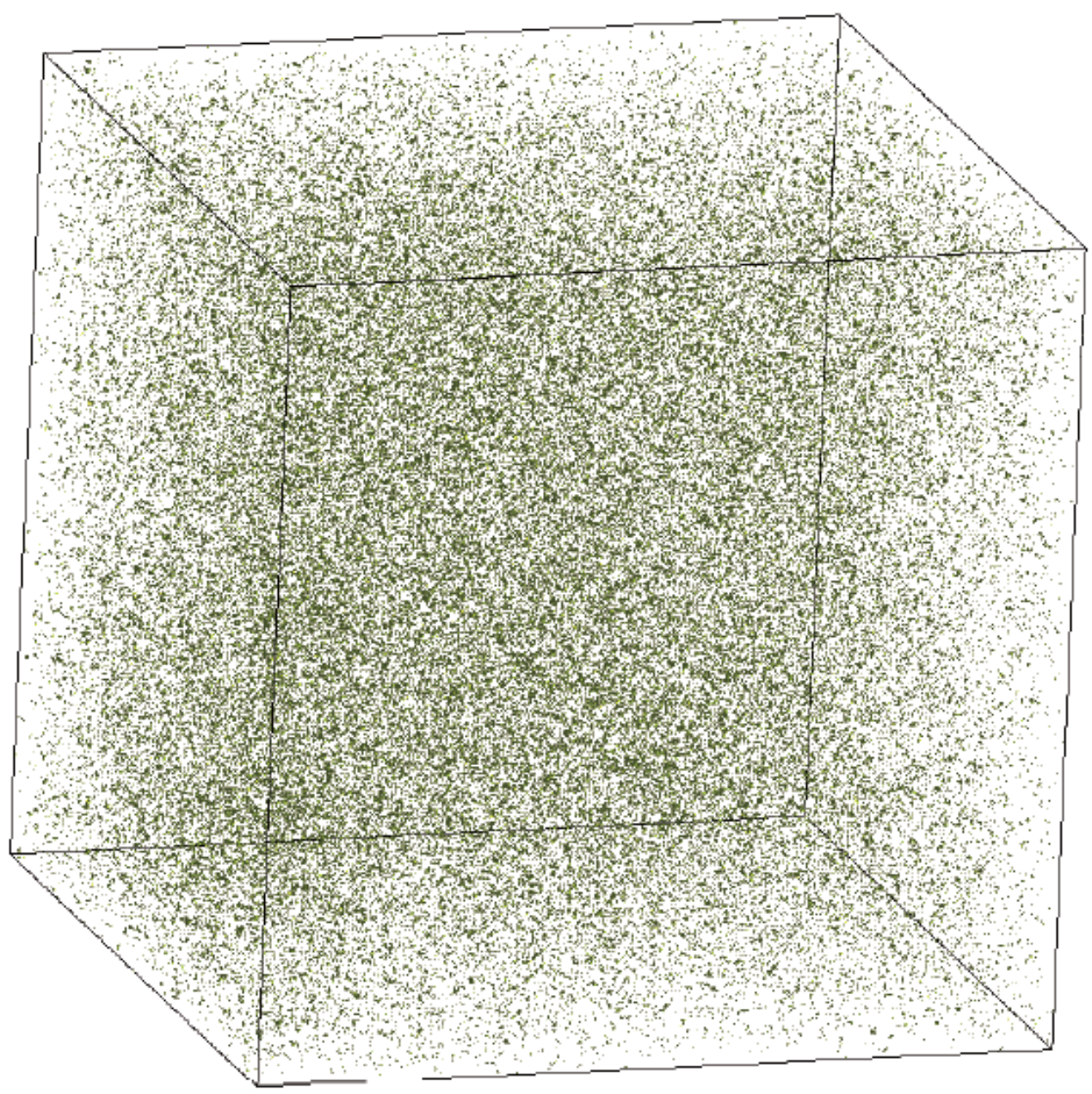}
\includegraphics[width=0.31\textwidth]{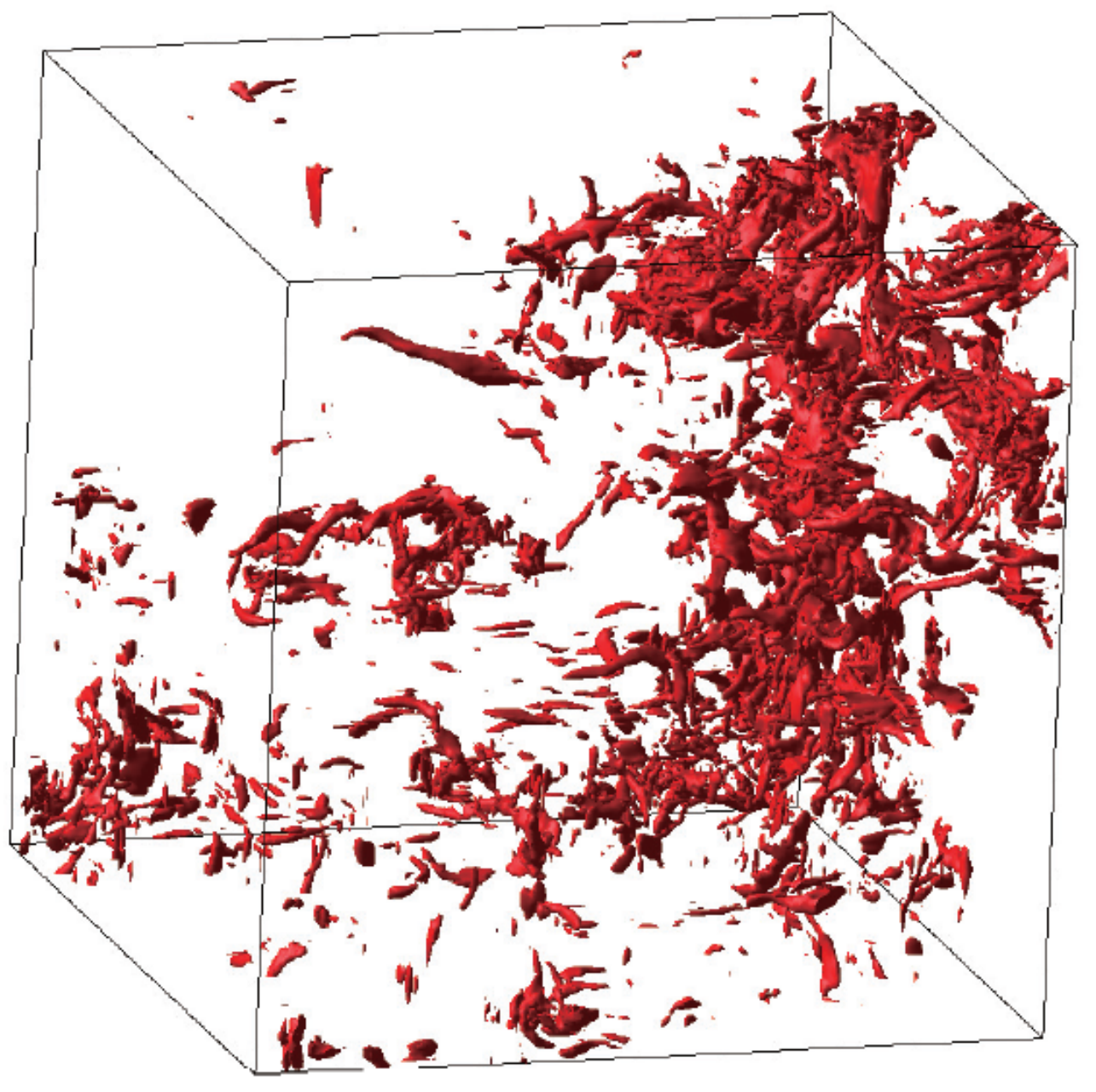}
\includegraphics[width=0.33\textwidth]{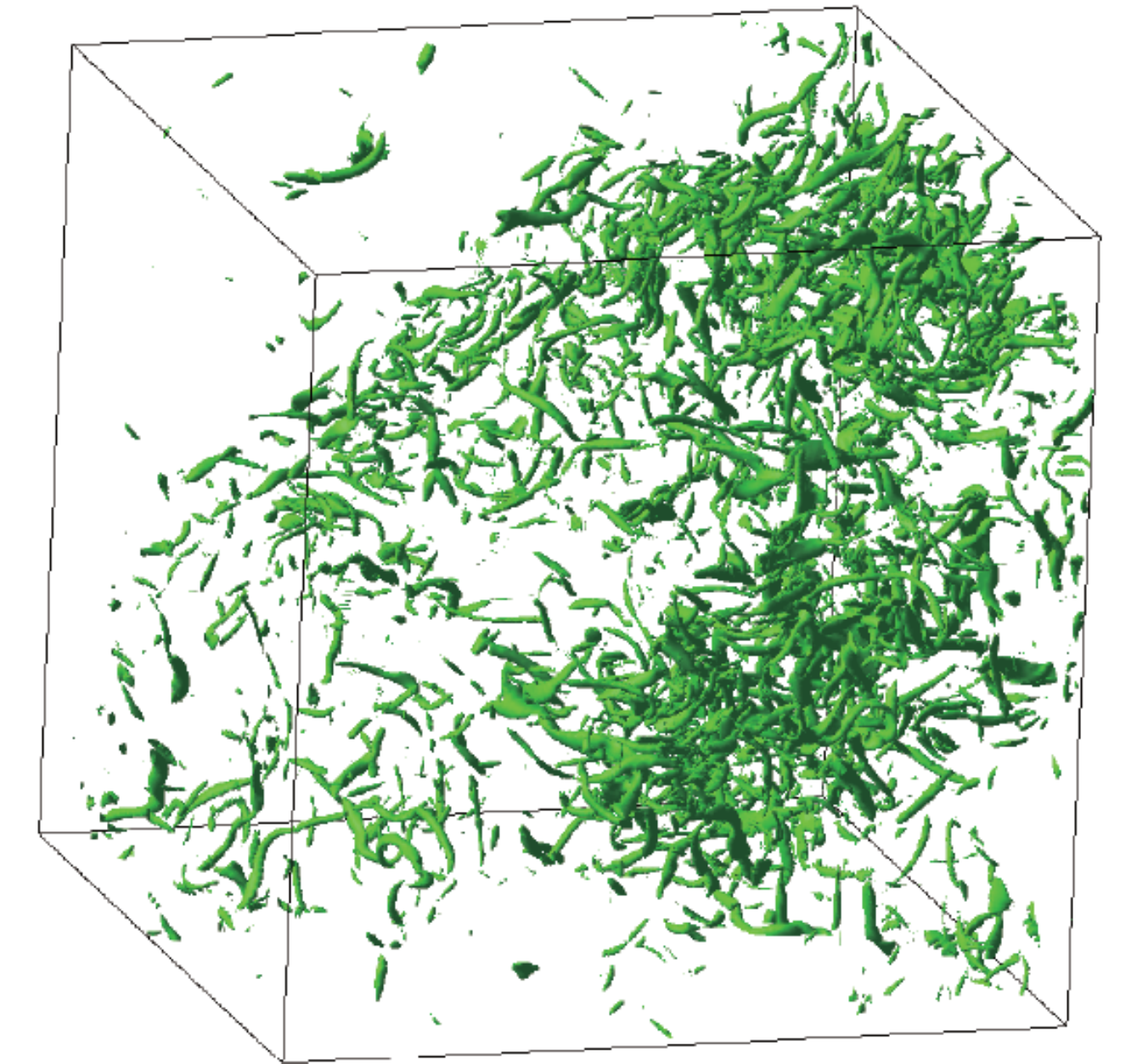}
\end{center}
\caption{Vorticity isosurfaces, $|{\bm \omega}| = M + 4 \sigma$ (where $M$ is the mean value and $\sigma$ the standard deviation of the modulus of vorticity of NS) for 3D incompressible Euler using  Galerkin truncated Euler (Euler, left), wavelet filtered Euler (CVS, center) and Navier-Stokes (NS, right) at time $t/\tau = 3.4$. From \cite{Farge2017}. 
\label{fig:3deuler_vorticty}}
\end{figure}

\section{Conclusions}
\label{sec:concl}

We presented a mathematical framework for analyzing dynamical Galerkin discretizations of evolutionary PDEs. The concept of weak formulations of countable ODEs with non smooth right-hand side in Banach spaces is used.
We showed that changing the set of active basis functions, which implies that the projection operators are non differentiable in time, can introduce energy dissipation. This feature is of crucial interest for adaptive schemes for time dependent equations, e.g., adaptive wavelet schemes for hyperbolic conservation laws and yields a mathematical explanation for their regularizing properties due to dissipation.

Numerical experiments illustrated the above results for the inviscid Burgers equation and the incompressible Euler equations in two and three space dimensions.
To this end the concept of pseudo-adaptive simulations was introduced to test the influence of wavelet thresholding, while solving the PDE with the classical Fourier Galerkin discretization.
The results showed that adaptive wavelet based regularization (i.e., filtering out the weak wavelet coefficients) of Galerkin schemes introduce dissipation together with related space adaptivity. The latter can be used for reducing the computational cost in fully adaptive computations.
Finally, let us mention an interesting link exists with LES models, see e.g., \cite{SZFA2006}, as the equivalence between nonlinear wavelet thresholding (using Haar wavelets) and a single step of explicitly discretized nonlinear diffusion can be shown, see \cite{MWS2003}.

Perspectives of this work are systematic studies of nonlinear hyperbolic conservation laws using adaptive Galerkin discretizations, in particular wavelet-based schemes and their regularization properties.

\bibliographystyle{siam}

\end{document}